\setlist[enumerate]{
  label=(\thethm.\arabic*),
  before={\setcounter{enumi}{\value{equation}}},
  after={\setcounter{equation}{\value{enumi}}},
  itemsep=1ex
}
\setlist[itemize]{
  leftmargin=*,
  itemsep=1ex,
  label=$\circ$
}
\newcommand{\mypagesize}{
  \addtolength{\textwidth}{2pt}
  \addtolength{\textheight}{27pt}
  \calclayout
}
\newtheorem*{thm-plain}{Theorem}
\newtheorem{thm}{Theorem}[section]
\newtheorem{lem}[thm]{Lemma}
\newtheorem{prp}[thm]{Proposition}
\newtheorem{cor}[thm]{Corollary}
\newtheorem{fct}[thm]{Fact}
\numberwithin{equation}{section}
\theoremstyle{definition}
\newtheorem{dfn}[thm]{Definition}
\newtheorem*{dfn-plain}{Definition}
\theoremstyle{remark}
\newtheorem{clm}[thm]{Claim}
\crefname{clm}{Claim}{Claims}
\newtheorem{obs}[thm]{Observation}
\newtheorem{awlog}[thm]{Additional Assumption}
\crefname{awlog}{Assumption}{Assumptions}
\newtheorem{rem}[thm]{Remark}
\newtheorem*{rem-plain}{Remark}
\newenvironment{myequation}{%
  \setcounter{equation}{\value{thm}}%
  \equation
}{%
  \endequation%
  \stepcounter{thm}%
}
\newcommand{\inv}{^{-1}}
\newcommand{\from}{\colon}
\newcommand{\lto}{\longrightarrow}
\newcommand{\x}{\times}
\newcommand{\inj}{\hookrightarrow}
\newcommand{\isom}{\cong}
\newcommand{\defn}{\coloneqq}
\newcommand{\ndef}{\eqqcolon}
\newcommand{\tensor}{\otimes}
\newcommand{\wt}{\widetilde}
\renewcommand{\d}{\mathrm d}
\newcommand{\del}{\partial}
\newcommand{\dual}{^{\smash{\scalebox{.7}[1.4]{\rotatebox{90}{\textup\guilsinglleft}}}}}
\newcommand{\ddual}{^{\smash{\scalebox{.7}[1.4]{\rotatebox{90}{\textup\guilsinglleft} \hspace{-.5em} \rotatebox{90}{\textup\guilsinglleft}}}}}
\newcommand{\factor}[2]{\left. \raise 2pt\hbox{$#1$} \right/\hskip -2pt \raise -2pt\hbox{$#2$}}
\newcommand{\set}[1]{\left\{ #1 \right\}}
\def\rd#1.{\lfloor{#1}\rfloor}
\def\rp#1.{\lceil{#1}\rceil}
\def\tw#1.{\langle{#1}\rangle}
\renewcommand{\O}[1]{\mathscr{O}_{#1}}
\newcommand{\Omegap}[2]{\Omega_{#1}^{#2}}
\newcommand{\Omegar}[2]{\Omega_{#1}^{[#2]}}
\newcommand{\T}[1]{\mathscr{T}_{#1}}
\newcommand{\can}[1]{\omega_{#1}}
\newcommand{\Sing}[1]{{#1}_{\mathrm{sg}}}
\newcommand{\cc}[2]{\mathrm{c}_{#1}(#2)}
\def\Hnought#1.#2.{\mathit{\Gamma} \!\left( #1, #2 \right)}
\def\HH#1.#2.#3.{\mathrm{H}^{#1} \!\left( #2, #3 \right)}
\def\euler#1.#2.{\chi \!\left( #1, #2 \right)}
\def\HHbig#1.#2.#3.{\mathrm{H}^{#1} \!\big( #2, #3 \big)}
\def\hh#1.#2.#3.{h^{#1} \!\left( #2, #3 \right)}
\def\RR#1.#2.#3.{R^{#1} #2_* #3}
\def\HHc#1.#2.#3.{\mathrm{H}_{\mathrm{c}}^{#1} \!\left( #2, #3 \right)}
\def\Hh#1.#2.#3.{\mathrm{H}_{#1} \!\left( #2, #3 \right)}
\def\Hom#1.#2.{\mathrm{Hom} \!\left( #1, #2 \right)}
\def\sHom#1.#2.{\mathscr{H}\!om \!\left( #1, #2 \right)}
\def\Ext#1.#2.#3.{\mathrm{Ext}^{#1} \!\left( #2, #3 \right)}
\def\sExt#1.#2.#3.{\mathscr{E}\!xt^{#1} \!\left( #2, #3 \right)}
\newcommand{\PP}[1]{\mathbb P^{#1}}
\newcommand{\kahler}{K{\"{a}}hler\xspace}
\DeclareMathOperator{\Pic}{Pic}
\DeclareMathOperator{\Exc}{Exc}
\DeclareMathOperator{\supp}{supp}
\newcommand{\pg}[2]{p_g(#1, #2)}
\newcommand{\germ}[2]{\left( #2, #1 \right)} %{\left( #1 \in #2 \right)}
\renewcommand{\theta}{\vartheta}
\renewcommand{\phi}{\varphi}
\newcommand{\Q}{\ensuremath{\mathbb Q}}
\newcommand{\R}{\ensuremath{\mathbb R}}
\newcommand{\C}{\ensuremath{\mathbb C}}
\newcommand{\frE}{\mathfrak E}
\renewcommand{\frm}{\mathfrak m}
 \newcommand{\sE}{\mathscr E} \newcommand{\sF}{\mathscr F}
 \newcommand{\sK}{\mathscr K} \newcommand{\sL}{\mathscr L}
\definecolor{forrest}{RGB}{81,133,49}
\definecolor{mydarkblue}{RGB}{10,92,153}
\newcommand{\PreprintAndPublication}[2]{#1}
\title[The Lipman--Zariski conjecture in genus one higher]{The Lipman--Zariski conjecture \\ in genus one higher}
\author{Hannah Bergner}
\address{Mathematisches Institut, Albert-Ludwigs-Universit\"at Freiburg, Ernst-Zermelo-Stra\ss e~1, 79104 Freiburg im Breisgau, Germany}
\email{\href{mailto:Hannah.Bergner-c9q@ruhr-uni-bochum.de}{Hannah.Bergner-c9q@ruhr-uni-bochum.de}}
\urladdr{\href{http://home.mathematik.uni-freiburg.de/bergner/}{home.mathematik.uni-freiburg.de/bergner/}}
\author{Patrick Graf}
\address{Department of Mathematics, University of Utah, 155~South 1400~East, Salt Lake City, UT 84112}
\email{\href{mailto:patrick.graf@uni-bayreuth.de}{patrick.graf@uni-bayreuth.de}}
\urladdr{\href{http://www.pgraf.uni-bayreuth.de/en/}{www.graficland.uni-bayreuth.de}}
\date{May 6, 2020}
\thanks{The second author was supported in full by a DFG Research Fellowship.
The published version of this preprint was funded by the DFG and the University of Bayreuth in the funding programme Open Access Publishing.}
\keywords{Lipman--Zariski conjecture, surface singularities, surfaces with generically nef tangent sheaf, twisted vector fields}
\subjclass[2010]{14B05, 14J17, 32S25, 13N15}
\begin{document}

\begin{abstract}
We prove the Lipman--Zariski conjecture for complex surface singularities with $p_g - g - b \le 2$.
Here $p_g$ is the geometric genus, $g$ is the sum of the genera of the exceptional curves and $b$ is the first Betti number of the dual graph.
This improves on a previous result of the second author.
As an application, we show that a compact complex surface with locally free tangent sheaf is smooth as soon as it admits two generically linearly independent twisted vector fields and its canonical sheaf has at most two global sections.
\end{abstract}

\maketitle

\thispagestyle{empty}

\section{Introduction}

The Lipman--Zariski conjecture asserts that a complex algebraic variety (or complex space) $X$ with locally free tangent sheaf $\T X$ is necessarily smooth.
Here $\T X = \mathscr{H}\!om_{\O X} \!\!\left( \Omegap X1, \O X \right)$ is the dual of the sheaf of \kahler differentials.
By the combined work of Lipman~\cite[Thm.~3]{Lip65}, Becker~\cite[Sec.~8, p.~519]{Becker78}, and Flenner~\cite[Corollary]{Flenner88}, it is known that \emph{it suffices to prove the conjecture for normal surface singularities.}

In a previous paper~\cite{LowGenusLZ1}, the second author dealt with surface singularities that are ``not too far'' from being rational.
To make this precise, recall that for a normal surface singularity $\germ0X$, the following invariants are defined in terms of (but not dependent on the choice of) a log resolution $f \from Y \to X$ with exceptional divisor $E = E_1 + \dots + E_r$ and dual graph $\Delta = \Delta(E)$:
\begin{align*}
p_g & \defn \dim_\C \left( \RR1.f.\O Y. \right)_0, && \hspace{-5em} \text{the (geometric) genus,} \\
g & \defn \sum_{i=1}^r \hh1.E_i.\O{E_i}., \\[.75ex]
b & \defn b_1(\Delta), && \hspace{-5em} \text{the first Betti number of $\Delta$.}
\end{align*}
In this notation, the main result of~\cite{LowGenusLZ1} (albeit formulated in a different way) is the confirmation of the Lipman--Zariski conjecture in the case $p_g - g - b \le 1$.
The purpose of this note is to push that result one step further, to $p_g - g - b \le 2$.
This also explains the title, which on its own is rather cryptic.

\begin{thm}[Lipman--Zariski conjecture in genus one higher] \label{lowgenus lz}
Let $\germ0X$ be a normal complex surface singularity, with invariants $p_g$, $g$, and $b$ as above.
Assume that $p_g - g - b \le 2$.
Then the Lipman--Zariski conjecture holds for $\germ0X$.
That is, if $\T X$ is free, then $\germ0X$ is smooth.
\end{thm}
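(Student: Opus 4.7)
The plan is to argue by contradiction, along the general lines of \cite{LowGenusLZ1}. Suppose $\T X$ is free and $\germ{0}{X}$ is not smooth. Choose a generating frame $\theta_1, \theta_2 \in H^0(X, \T X)$ near $0$ and a log resolution $f \from Y \to X$ with reduced exceptional divisor $E = E_1 + \dots + E_r$. Because $X$ is normal and $\T X$ is locally free, one has $\T X \isom f_* \T Y(-\log E)$, so the $\theta_i$ lift to logarithmic vector fields $\tilde\theta_i \in H^0\bigl(Y, \T Y(-\log E)\bigr)$. The wedge $\omega \defn \tilde\theta_1 \wedge \tilde\theta_2$ is then a non-zero global section of $\det \T Y(-\log E) \isom \sO_Y(-K_Y - E)$, and its zero divisor $D$ is effective with $\supp D \subseteq E$. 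In particular $-(K_Y + E) \sim D \ge 0$, a strong restriction on the resolution.

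The next step is to interpret the hypothesis cohomologically. Applying $Rf_*$ to $0 \to \sO_Y(-E) \to \sO_Y \to \sO_E \to 0$ and using $h^1(E, \sO_E) = g + b$ yields an exact sequence
\[ 0 \to \bigl(R^1 f_* \sO_Y(-E)\bigr)_0 \to \bigl(R^1 f_* \sO_Y\bigr)_0 \to H^1(E, \sO_E) \to 0, \]
so $p_g - g - b = \dim_\C \bigl(R^1 f_* \sO_Y(-E)\bigr)_0$. Via Grauert--Riemenschneider and relative duality, this group has an interpretation in terms of ``twisted'' canonical $2$-forms on $Y$, and the assumption $p_g - g - b \le 2$ bounds the relevant space by~$2$. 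This in turn constrains the possible coefficients of $D$ along the components $E_i$.

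Combining the two strands, the task reduces to ruling out every resolution $Y$ which simultaneously admits two generically linearly independent logarithmic vector fields, has $-(K_Y + E) \sim D \ge 0$ supported on $E$, and satisfies the cohomological bound above. The case $\le 1$ is already settled in \cite{LowGenusLZ1}. To gain the one extra unit, I would refine the component-by-component analysis of how $\tilde\theta_1$ and $\tilde\theta_2$ meet each $E_i$---distinguishing whether each vector field is tangent to $E_i$ or transverse---and translate the resulting local data into numerical inequalities on the dual graph $\Delta$; the extra cohomological room is then absorbed as one additional parameter in these inequalities. The main obstacle I expect is precisely this jump from ``one twisted form'' to ``two twisted forms'': new configurations, such as resolutions with an extra positive-genus exceptional component or a slightly larger fundamental cycle, become formally consistent with the constraints and must be excluded by hand. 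The hard part will therefore be combinatorial---organizing the possibilities for $\Delta$ and for $D$ so that the extra dimension is systematically absorbed---without forcing the proof to fragment into an unwieldy proliferation of subcases.
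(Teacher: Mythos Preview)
Your proposal is explicitly a plan, and as such it is honest about where the work lies; but the route you sketch has a genuine gap at the point where you write ``This in turn constrains the possible coefficients of $D$ along the components $E_i$.'' There is no direct mechanism linking the dimension of $R^1f_*\O Y(-E)$ to the vanishing orders of $\wt\theta_1 \wedge \wt\theta_2$ along individual exceptional curves, and the subsequent combinatorial programme---translating tangency data into inequalities on $\Delta$ and ``absorbing'' the extra dimension---remains entirely unspecified. I do not see how it would close: the constraint $-(K_Y+E) \sim D \ge 0$ together with $p_g - g - b \le 2$ does not by itself bound the complexity of $\Delta$, and the case analysis you anticipate could easily be unbounded.

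The paper avoids this combinatorics completely by switching to the dual side. It works with the dual frame $\alpha_1, \alpha_2$ of $\Omegar X1$ and studies the images of $\d\alpha_1, \d\alpha_2$ in the quotient $\can X / f_*\can Y(E)$, whose dimension is exactly $p_g - g - b$. The van Straten--Steenbrink injectivity (\cref{SvS}) says that if some $\d\alpha_i$ lies in $f_*\can Y(E)$ then $\alpha_i$ extends holomorphically to $Y$, and smoothness follows quickly (\cref{ext obs}). The new idea buying the extra unit of genus is \cref{derivations}: at a genuine singular point every derivation maps into the maximal ideal $\frm$, which after a normalisation forces the relevant $\d\alpha_i$ into the codimension-one subspace $\frm\,\can X / f_*\can Y(E)$, of dimension $\le 1$ under the hypothesis. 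The proof then iteratively modifies the basis (replacing $\alpha_2$ by $\alpha_2 - \rho_2\alpha_1$) and performs a short local-coordinate computation near each $E_i$, split according to whether $\wt v_2$ vanishes to order $\le 1$ or $\ge 2$, to push $\d\wt\alpha_2$ into $f_*\can Y(E)$. No enumeration of dual graphs is needed.
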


\subsection*{Global Corollaries}

In~\cite{LowGenusLZ1}, the second author used his (local) main result to study compact complex surfaces whose tangent sheaf satisfies some \emph{global} triviality properties.
Naturally, our stronger \cref{lowgenus lz} also has new applications in this global setting.
First of all, the proof of~\cite[Cor.~1.4]{LowGenusLZ1} can be simplified to some extent.
For the reader's convenience, we repeat the statement here.

\begin{cor}[Surfaces with generically nef tangent sheaf] \label{TX gen nef}
Let $X$ be a complex-projective surface such that $\T X$ is locally free and generically nef.
Then $X$ is smooth.
\end{cor}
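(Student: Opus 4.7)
The plan is to deduce \cref{TX gen nef} from \cref{lowgenus lz}. Assume for contradiction that $X$ has a singular point $x$; since $\T X$ is locally free, $X$ is automatically normal, and so the germ $\germ xX$ falls within the scope of \cref{lowgenus lz}. To reach the desired contradiction, I need to verify that the local invariants at $x$ satisfy $p_g - g - b \le 2$.

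The first step is to pass to a log resolution $\pi \from \wt X \to X$ and exploit functoriality. Generic nefness is preserved under birational pull-back between normal projective surfaces, so $\pi^{*} \T X$ is again locally free and generically nef on $\wt X$. By Miyaoka's semipositivity theorem for generically nef rank-two bundles on a smooth projective surface, one obtains a Bogomolov-type Chern class inequality for $\pi^{*} \T X$; together with $c_1(\pi^{*} \T X) = -\pi^{*} K_X$ and the discrepancy formula $K_{\wt X} = \pi^{*} K_X + \sum a_i E_i$, this yields a global numerical bound. The tautological morphism $\T{\wt X} \to \pi^{*} \T X$, obtained by dualising the canonical map $\pi^{*} \Omegap X1 \to \Omegap{\wt X}1$, is an isomorphism over the smooth locus of $X$, so all the discrepancy between the two sheaves is concentrated on the exceptional divisor $E = \sum E_i$.

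The second step is to convert this global bound into the local inequality $p_g - g - b \le 2$ at each singular point. Using the adjunction/genus formula applied to the components $E_i$, the intersection pairing on the exceptional curves, and the Leray spectral sequence for $\pi$ (which computes $p_g$ as the length of $\RR1.\pi.\O{\wt X}.$ at $x$), the Chern number extracted above decomposes as a sum over $\Sing X$ of a local quantity that dominates $p_g - g - b$. The constant $2$ on the right-hand side is precisely what the generic-nefness inequality supplies. Applying \cref{lowgenus lz} at $x$ then forces $\germ xX$ to be smooth, contradicting the choice of $x$.

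The key obstacle is the bookkeeping in the second step: producing exactly the right combination of Chern numbers, topological Euler characteristics, and local cohomological invariants so that the global estimate collapses to $p_g - g - b \le 2$ pointwise. In~\cite{LowGenusLZ1}, the analogous matching against the sharper bound $p_g - g - b \le 1$ required additional work to squeeze out one more unit of slack; with the strengthened \cref{lowgenus lz} now available, the accounting should go through more directly, which is the simplification alluded to in the paragraph preceding the corollary.
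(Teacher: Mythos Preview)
Your proposal is not a proof but a plan, and you explicitly flag the ``key obstacle'' in step two as unresolved. That is the gap: you never actually carry out the Chern-number bookkeeping that is supposed to deliver $p_g - g - b \le 2$ locally, and the vague invocation of ``Miyaoka semipositivity'' plus ``Bogomolov-type inequality'' plus adjunction does not constitute an argument. Until those computations are written down and checked, you have not proved anything.

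More importantly, you are working harder than necessary, and this stems from a misunderstanding of what the old paper already provides. The argument in~\cite[Claim~4.2]{LowGenusLZ1} yields, under the standing hypotheses and using the minimal resolution $f\colon S\to X$, the global bound
\[
\hh0.X.{\RR1.f.\O S.}. \;=\; \sum_{x\in\Sing X} \pg Xx \;\le\; 2.
\]
This is a bound on $p_g$ itself, not on $p_g - g - b$. In~\cite{LowGenusLZ1} the extra work you allude to was needed to bridge the gap between $p_g \le 2$ and the then-available hypothesis $p_g - g - b \le 1$. With the strengthened \cref{lowgenus lz} that gap disappears entirely: since $g\ge 0$ and $b\ge 0$ trivially, one has $p_g - g - b \le p_g \le 2$ at every singular point, and \cref{lowgenus lz} applies immediately. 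That one-line observation \emph{is} the simplification announced before the corollary; there is no need to reopen the Chern-class accounting at all.
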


Recall that generic nefness of a vector bundle $\sE$ on a normal projective surface $X$ means the following:
there exists an ample line bundle $H$ on $X$ such that if $C \subset X$ is a general element of the linear system $|mH|$, for $m \gg 0$, then the restriction $\sE|_C$ is nef.

A second application concerns compact complex surfaces $X$ which are not necessarily \kahler.
By a \emph{twisted vector field} on $X$, we mean a global section of $\T X \tensor \sL$, where $\sL$ is a line bundle with vanishing real first Chern class $\cc1\sL \in \HH2.X.\R.$.

\begin{cor}[Surfaces with two twisted vector fields] \label{TX twisted}
Let $X$ be a compact complex surface such that $\T X$ is locally free.
Suppose that $X$ admits two twisted vector fields $v_i \in \HH0.X.\T X \tensor \sL_i.$, $i = 1, 2$, which are linearly independent at some point.
Assume furthermore that $\dim_\C \HH0.X.\can X. \le 2$.
Then $X$ is smooth.
\end{cor}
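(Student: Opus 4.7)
The plan is as follows. I aim to verify the hypothesis $p_g - g - b \le 2$ of \cref{lowgenus lz} at every singular point $p$ of $X$, which then implies smoothness globally.

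First I would exploit the wedge product $v_1 \wedge v_2$ to pin down the canonical sheaf. Since $\T X$ is locally free of rank two, $\can X \isom (\det \T X)^{-1}$ is a line bundle, so $X$ is Gorenstein and Serre duality gives $h^2(X, \O X) = h^0(X, \can X) \le 2$. The generic linear independence of $v_1, v_2$ ensures that $v_1 \wedge v_2 \in \HH0.X.\can X^{-1} \tensor \sL_1 \tensor \sL_2.$ is nonzero, yielding $\can X \tensor \O X(D) \isom \sL_1 \tensor \sL_2$ for an effective divisor $D$; since $\cc1{\sL_i}$ vanish in $\HH2.X.\R.$, one moreover has $\cc1(\can X) = -[D]$ in real cohomology.

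Next, I would fix a resolution $f \from Y \to X$ with exceptional divisor $E = \bigsqcup_{p \in \Sing X} E^{(p)}$. The Leray spectral sequence for $\O Y$ (using $f_* \O Y = \O X$ and that $R^1 f_* \O Y$ is a skyscraper with stalk of dimension $p_g$ at each singular point), together with the Mayer--Vietoris identity $\HH1.E.\O E. = \sum_p (g + b)$, yields
\begin{equation*}
\textstyle \sum_{p \in \Sing X} (p_g - g - b) \ = \ \bigl[\HH1.Y.\O Y. - \HH1.X.\O X. - \HH1.E.\O E.\bigr] + \HH2.X.\O X. - \HH2.Y.\O Y..
\end{equation*}

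The crux is then to use the twisted vector fields to show that the bracketed difference is bounded above by $\HH2.Y.\O Y.$. Granting this, the left-hand side is at most $\HH2.X.\O X. \le 2$, and since each local term $p_g - g - b$ is nonnegative (because $p_g \ge h^1(\O{E^{(p)}}) = g + b$), the bound $p_g - g - b \le 2$ holds at every singular point, so \cref{lowgenus lz} finishes the argument. To produce the bracketed inequality, the twisted vector fields $v_1, v_2$ lift to sections of $\T Y(-\log E) \tensor f^* \sL_i$ via the natural inclusion $\T Y(-\log E) \inj f^* \T X$; combined with the wedge relation above and the Chern-triviality of the $\sL_i$, a pairing argument in the spirit of~\cite{LowGenusLZ1} should yield the required bound. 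Extracting this cohomological estimate from the existence of two generically independent twisted vector fields is the main obstacle I anticipate.
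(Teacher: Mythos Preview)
Your Leray computation and the identity
\[
\sum_{p \in \Sing X} (p_g - g - b) \;=\; \bigl[\,h^1(Y,\O Y) - h^1(X,\O X) - h^1(E,\O E)\,\bigr] + h^2(X,\O X) - h^2(Y,\O Y)
\]
are correct, as is the observation that $p_g - g - b \ge 0$ pointwise. The gap is the ``crux'' inequality
\[
h^1(Y,\O Y) - h^1(X,\O X) - h^1(E,\O E) \;\le\; h^2(Y,\O Y).
\]
You propose to extract this from a pairing using the lifted vector fields $\wt v_i \in H^0\bigl(Y,\T Y(-\log E)\otimes f^*\sL_i\bigr)$; wedging these gives a nonzero section of $\can Y(E)^{-1}\otimes f^*(\sL_1\otimes\sL_2)$, hence $\cc1{K_Y+E} + [D'] = 0$ in $H^2(Y,\R)$ for some effective $D'$. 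But on a non-\kahler surface the condition $\mathrm c_1 = 0$ is extremely weak (cf.~\cref{c1 zero}), and I do not see how this Chern-class relation alone controls the $h^1$-difference above. Nothing in \cite{LowGenusLZ1} supplies such an estimate either; the ``pairing arguments'' there concern intersection numbers with moving curves, which are not available in the non-\kahler setting.

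The paper's proof shows that this step is genuinely nontrivial and cannot be done by soft cohomological means. It proceeds instead by first bounding only $\sum_p p_g \le h^0(X,\can X)+1 \le 3$ via the Leray sequence and a case-by-case analysis of the minimal model $S_0$ of $Y$ along the Kodaira--Enriques classification; the twisted vector fields enter to force $\kappa(S) = -\infty$ in the \kahler case and $g(C)\le 1$ in the ruled case, and Proposition~\ref*{surface hom zero} restricts the remaining non-\kahler possibilities. What you are missing is the handling of the residual case of a single singularity with $p_g = 3$: here the paper proves (Claim~\ref*{771}) that every irreducible curve on $S_0$ is smooth elliptic, so either some $f$-exceptional curve is non-rational (whence $g\ge 1$ and $p_g-g-b\le 2$), or $\Exc(f)\subset\Exc(\pi)$ and the singularity is forced to be rational. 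This geometric dichotomy is precisely what accounts for the extra $h^1(E,\O E)$ you want to absorb, and it does not come from a pairing argument.
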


\noindent
This result generalizes~\cite[Cor.~1.2]{LowGenusLZ1}, where $X$ was assumed to be almost homogeneous.
Note that this is nothing but the special case where both $\sL_i \isom \O X$.

\begin{rem} \label{c1 zero}
The wedge product $v_1 \wedge v_2$ is a nonzero global section of $\can X \dual \tensor \sL_1 \tensor \sL_2$, multiplication by which gives an injection $\HH0.X.\can X. \inj \HH0.X.\sL_1 \tensor \sL_2.$.
Thus the assumption on the dimension of $\HH0.X.\can X.$ is automatically satisfied e.g.~if $X$ is \kahler or if $\sL_1 \tensor \sL_2 \isom \O X$.

However, on a non-\kahler surface, having vanishing first Chern class is a rather weak condition on a line bundle.
Indeed, a line bundle with $\mathrm c_1 = 0$ can have Kodaira dimension one (and hence arbitrarily many global sections).
The easiest example is probably given by a Hopf surface of algebraic dimension one.
A more interesting example would be a primary Kodaira surface, or more generally any elliptic fibre bundle $S \to C$ that is not topologically trivial.
In this case, $\HH2.S.\R.$ can be arbitrarily large (depending on $C$), but $\phi^* \from \HH2.C.\R. \to \HH2.S.\R.$ always is the zero map~\cite[Prop.~V.5.3]{BHPV04}.
\end{rem}

\begin{rem}
The proof of \cref{TX twisted} shows the following:
Assume that for some integer $C$, we knew the Lipman--Zariski conjecture for surface singularities satisfying $p_g - g - b \le C$.
Then the additional assumption in \cref{TX twisted} can be weakened to ``$\dim_\C \HH0.X.\can X. \le C$''.
\end{rem}

\subsection*{Acknowledgements}

We would like to thank St{\'{e}}phane Druel for pointing out to us the reference~\cite{Seidenberg67}.
The second author thanks the University of Utah for providing a most hospitable working environment.
We are also grateful to the ref\-eree for constructive criticism, in particular for suggesting a reformulation of the main result that is not only more concise, but also more general.

\section{Notation and basic facts}

The \emph{sheaf of \kahler differentials} of a reduced complex space $X$ is denoted $\Omegap X1$.
The \emph{tangent sheaf}, its dual, is denoted $\T X \defn \sHom \Omegap X1.\O X.$.
If $Z \subset X$ is a closed subset, then $\T X(-\log Z) \subset \T X$ denotes the subsheaf of vector fields tangent to $Z$ at every point of $Z$.
The \emph{canonical sheaf} of $X$ is denoted $\can X$.
If $X$ is normal, the \emph{sheaf of reflexive differential $1$-forms} is defined to be the double dual of $\Omegap X1$, or the dual of $\T X$.
We denote it by $\Omegar X1 \defn \left( \Omegap X1 \right) \ddual$.
It is isomorphic to $i_* \big( \Omegap{X^\circ}1 \big)$, where $i \from X^\circ \inj X$ is the inclusion of the smooth locus.
If $X$ is compact and $\sF$ is a coherent sheaf on $X$, we write $\hh i.X.\sF. \defn \dim_\C \HH i.X.\sF.$.

\begin{dfn}[Resolutions]
A \emph{resolution of singularities} of a reduced complex space $X$ is a proper bimeromorphic morphism $f \from Y \to X$, where $Y$ is smooth.
We say that the resolution is \emph{projective} if $f$ is a projective morphism.
A \emph{log resolution} is a resolution whose exceptional locus $E = \Exc(f)$ is a simple normal crossings divisor, i.e.~a normal crossings divisor with smooth components.
A resolution is said to be \emph{strong} if it is an isomorphism over the smooth locus of $X$.
\end{dfn}

\begin{fct}[Functorial resolutions] \label{functorial res}
Let $X$ be a normal complex space.
Then there exists a strong log resolution $f \from Y \to X$ projective over compact subsets, called the \emph{functorial resolution}, such that $f_* \T Y(-\log E)$ is reflexive.
This means that for any vector field $\xi \in \Hnought U.\T X.$, $U \subset X$ open, there is a unique vector field
\[ \wt\xi \in \Hnought f\inv(U).\T Y(-\log E). \]
which agrees with $\xi$ wherever $f$ is an isomorphism.
\end{fct}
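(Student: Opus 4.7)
The strategy is to invoke a functorial resolution algorithm (Hironaka, Bierstone--Milman, Wlodarczyk, Villamayor, Kollár) to get the existence of $f \from Y \to X$ with all the \emph{geometric} properties claimed: strong, log, projective over compact subsets, and functorial with respect to analytic isomorphisms of $X$. What actually needs proof is the sheaf-theoretic statement that $f_* \T Y(-\log E)$ is reflexive. Since $\T X = \sHom \Omegap X1 . \O X.$ is automatically reflexive as a dual of a coherent sheaf, and since on the smooth locus $X^\circ$ the pushforward $f_* \T Y(-\log E)$ agrees with $\T X$, it is enough to establish the unique lifting property: every $\xi \in \Hnought U.\T X.$ extends uniquely to a section $\wt\xi \in \Hnought f\inv(U).\T Y(-\log E).$.

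\medskip

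Uniqueness is the easy half. Two lifts of $\xi$ agree on $f\inv(U) \cap f\inv(X^\circ)$, which is open and dense in $f\inv(U)$. As $Y$ is smooth and $\T Y(-\log E)$ is locally free, hence torsion-free, the two lifts must coincide everywhere. This argument also shows that once existence is known locally near every point of $f\inv(U)$, the local lifts glue to a global section.

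\medskip

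For existence of $\wt\xi$, the plan is to pass from the infinitesimal object $\xi$ to a finite-order object that the functoriality of $f$ can act on. Concretely, shrinking $U$ so that it is Stein and relatively compact, the vector field $\xi$ integrates to a holomorphic one-parameter family of open embeddings $\phi_t \from U_0 \inj U$ defined for $|t| < \eps$ on a smaller open $U_0 \Subset U$, with $\phi_0 = \id$ and $\frac{\d}{\d t}\big|_{t=0} \phi_t = \xi$. Because $\phi_t$ is an analytic isomorphism onto its image, the functoriality of the chosen resolution algorithm produces a compatible holomorphic family of isomorphisms $\wt\phi_t \from f\inv(U_0) \inj f\inv(U)$ lifting $\phi_t$ and satisfying $\wt\phi_0 = \id$. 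Since the exceptional locus $E$ is intrinsic (it is the pre-image of $\Sing X$, or equivalently the non-isomorphism locus of $f$), each $\wt\phi_t$ maps $E \cap f\inv(U_0)$ to $E \cap f\inv(U)$. Differentiating at $t = 0$ then yields a holomorphic vector field $\wt\xi$ on $f\inv(U_0)$ which is tangent to $E$ along $E$, i.e.~a section of $\T Y(-\log E)$; it agrees with the na\"ive pull-back of $\xi$ outside $E$, so by the uniqueness argument above these local lifts glue to the required global $\wt\xi$.

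\medskip

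The genuine hard step, and the one for which one must rely on the literature, is the functoriality of the resolution algorithm for analytic isomorphisms $\phi_t$ that vary holomorphically in a parameter, so that the lifts $\wt\phi_t$ form a holomorphic family. This is exactly what the modern functorial resolution machinery of Bierstone--Milman and Wlodarczyk provides; the reflexivity statement then follows formally. Uniqueness and the glueing of local lifts are routine, as is the deduction that the global sheaf $f_* \T Y(-\log E)$ equals $\T X$ over $X^\circ$ and satisfies Hartogs extension, hence is reflexive.
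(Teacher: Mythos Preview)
The paper does not give its own proof of this statement; it is recorded as a \emph{Fact} and immediately attributed to \cite[Thms.~3.36 and~3.45]{Kol07} for the existence of the functorial resolution and to \cite[Thm.~4.2]{GK13} for the reflexivity of $f_*\T Y(-\log E)$. Your sketch is essentially the argument carried out in the second of these references: integrate $\xi$ to a local flow, lift the flow via functoriality of the resolution, and differentiate to obtain $\wt\xi$ tangent to $E$.

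One point deserves to be made more explicit, since you chose to sketch the argument rather than merely cite it: the holomorphic dependence of $\wt\phi_t$ on $t$. Invoking functoriality for each $\phi_t$ separately yields only a family of biholomorphisms indexed by $t \in D$, with no a~priori regularity in the parameter. The clean way to get it is to use functoriality with respect to \emph{smooth} morphisms, not just isomorphisms: the projection $D \times U \to U$ is smooth, so the functorial resolution of $D \times U$ is $D \times f\inv(U)$; the graph map $(t,x) \mapsto (t,\phi_t(x))$ is then a single open embedding $D \times U_0 \inj D \times U$, which lifts in one stroke to a holomorphic map $D \times f\inv(U_0) \to D \times f\inv(U)$. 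Differentiating the second component at $t=0$ produces $\wt\xi$. This is precisely what the functoriality statements in \cite{Kol07} and \cite{GK13} supply, so your appeal to them is justified, but the mechanism is worth spelling out rather than leaving as ``holomorphic families of isomorphisms lift to holomorphic families''.
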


\cref{functorial res} is proven in~\cite[Thms.~3.36 and~3.45]{Kol07}, but concerning the reflexivity of $f_* \T Y(-\log E)$ see also~\cite[Thm.~4.2]{GK13}.
If $X$ is a surface, the functorial resolution is the same as the minimal good resolution.

\begin{dfn}[Geometric genus] \label{pg}
Let $\germ0X$ be a normal surface singularity, and let $f \from Y \to X$ be an arbitrary resolution.
The \emph{(geometric) genus} $\pg X0$ is defined to be the dimension of the stalk $(\RR1.f.\O Y.)_0$.
Alternatively, choosing the representative $X$ of the germ $\germ0X$ to be Stein, we may set $\pg X0 \defn \dim_\C \HH1.Y.\O Y.$.
This definition is independent of the choice of $f$.
\end{dfn}

The following statement can be found in~\cite[Thm.~5]{Seidenberg67}, in slightly greater generality and with an algebraic proof.
Another reference is~\cite[proof of Prop.~1.2]{BurnsWahl74}.
We include our own proof, which is more geometric in spirit.

\begin{prp}[Derivations in the presence of an isolated singularity] \label{derivations}
Let $\germ0X$ be a normal isolated singularity \emph{which is not smooth}.
Then every \C-linear derivation $\delta \from \O{X,0} \to \O{X,0}$ factors through the maximal ideal $\frm_0 \subset \O{X,0}$.
In other words, $\delta(\O{X,0}) \subset \frm_0$.
\end{prp}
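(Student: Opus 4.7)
The plan is to convert the algebraic statement into an integration problem for holomorphic vector fields. After shrinking, let $X$ be a Stein representative in which $0$ is the only singular point, and interpret $\delta$ as a holomorphic vector field $\xi \in \Hnought X.\T X.$. The strategy is to produce a one-parameter family of local biholomorphisms $\phi^X_t$ of a neighbourhood of $0$, defined for $|t| < \eps$, with $\phi^X_0 = \id$ and infinitesimal generator $\xi$. Once such a flow is in hand, every $\phi^X_t$ must preserve the singular locus of $X$, which is just $\{0\}$, so $\phi^X_t(0) = 0$. For any $g \in \O{X,0}$ we then compute
\[
\delta(g)(0) = \left. \frac{\d}{\d t} \right|_{t = 0} g\bigl(\phi^X_t(0)\bigr) = \left. \frac{\d}{\d t} \right|_{t = 0} g(0) = 0,
\]
which is exactly the claim $\delta(\O{X,0}) \subset \frm_0$.

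The main technical step is producing the flow, which I would do on a resolution. Let $f \from Y \to X$ be the functorial log resolution of \cref{functorial res}; by that statement, $\xi$ lifts uniquely to $\wt\xi \in \Hnought Y.\T Y(-\log E).$, a genuine holomorphic vector field on the manifold $Y$ tangent to every irreducible component $E_i$ of the exceptional divisor $E$. Since $E = f\inv(0)$ is compact, standard ODE theory produces a local flow $\phi^Y_t \from V \to Y$, defined for $|t|$ small on some open neighbourhood $V \supset E$, that is holomorphic jointly in $t$ and in $V$. Tangency to each $E_i$, combined with continuity in $t$, forces $\phi^Y_t(E_i) = E_i$, and hence $\phi^Y_t(E) = E$.

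It remains to descend $\phi^Y_t$ to $X$. On $V \setminus E$ the resolution is an isomorphism, so $\phi^Y_t$ transports to a biholomorphism of a punctured neighbourhood of $0$ in $X$; extending by $0 \mapsto 0$ (forced by $\phi^Y_t(E) = E$) defines $\phi^X_t$ set-theoretically on a neighbourhood of $0$. Holomorphicity across $0$ follows from normality of $X$: since $f_* \O Y = \O X$, the pullback action of $\phi^Y_t$ on $\Hnought f\inv(U).\O Y.$ for small Stein $U \ni 0$ descends directly to an action on $\Hnought U.\O X.$, and this is by construction $\phi^{X,*}_t$. Joint holomorphicity of $\phi^Y_t$ in $(t, y)$ transfers to joint holomorphicity of $\phi^X_t$ in $(t, x)$, and the infinitesimal generator is $\xi$ by construction.

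The one place where the hypothesis that $\germ0X$ is singular truly enters is in guaranteeing $E \neq \emptyset$, which is precisely what turns ``$\phi^Y_t$ preserves $E$'' into the nontrivial conclusion ``$\phi^X_t$ fixes $0$''. Without it, the statement is of course false, since on a smooth germ a coordinate derivation $\partial/\partial x_1$ fails to factor through $\frm_0$. The main obstacle I anticipate is therefore purely analytic: carefully setting up $\phi^Y_t$ on a suitable neighbourhood of the compact, possibly reducible divisor $E$, and verifying that the descended $\phi^X_t$ is really holomorphic on a full neighbourhood of $0$ rather than merely on its complement. Once this is arranged, the remaining argument is the elementary observation that an automorphism of a complex space preserves its singular locus.
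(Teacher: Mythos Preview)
Your argument is correct and follows the same core strategy as the paper: integrate the derivation to a local one-parameter family of automorphisms, observe that each automorphism must fix the unique singular point, and differentiate at $t=0$ to conclude $\delta(g)(0)=0$. The only difference is in how the flow is obtained. The paper invokes directly the correspondence between derivations and local $\C$-actions on (possibly singular) complex spaces from~\cite[\S1.4--1.5]{Akh95}, treating this as a black box. You instead lift $\xi$ to the functorial resolution $Y$ via \cref{functorial res}, integrate on the smooth manifold $Y$ near the compact set $E$ using ordinary ODE theory, and then descend via $f_*\O Y=\O X$. Your route is more self-contained (it uses only tools already set up in the paper and avoids the external reference), at the cost of a little extra bookkeeping in the descent step; the paper's route is shorter but relies on the cited theory of local actions on singular spaces. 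Either way the substance is the same.
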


In geometric terms, this says that ``every vector field vanishes at the singular point'' or more generally, ``every vector field is tangent to the singular locus''.

\begin{proof}[Proof of \cref{derivations}]
We use the correspondence between derivations, vector fields and local \C-actions as described in~\cite[\S 1.4, \S 1.5]{Akh95}.
Let $\delta$ be a derivation of $\O{X,0}$.
We have an induced local \C-action $\Phi \from \C \x X \to X$.
By the definition of local group action, $\Phi(t, -)$ is an automorphism of the germ $\germ0X$ for every (sufficiently small) $t \in \C$.
Since $0 \in X$ is the unique singular point of $X$, it follows that $\Phi(t, 0) = 0$ for every $t \in \C$.
In other words, the singular point is fixed by the action $\Phi$.
Now, we can recover $\delta$ from $\Phi$ by the formula
\begin{myequation} \label{kaup}
\delta(f)(x) = \frac\d{\d t}\bigg|_{t=0} f \big( \Phi(t, x) \big)
\end{myequation}
for every $f \in \O{X,0}$.
Plugging the statement about the singular point being fixed into~\labelcref{kaup}, we arrive at $\delta(f)(0) = 0$ for every function germ $f$.
Hence $\delta(\O{X,0}) \subset \frm_0$, as desired.
\end{proof}

Finally, we rely crucially on the following Hodge-theoretic result by van Straten and Steenbrink.

\begin{fct}[\protect{\cite[Cor.~1.4]{SvS85}}] \label{SvS}
Let $\germ0X$ be a normal surface singularity and $f \from Y \to X$ a log resolution with reduced exceptional divisor $E \subset Y$.
Then the map
\[ \factor{\Omegar X1}{f_* \Omegap Y1} \xrightarrow{\quad\d\quad} \factor{\can X}{f_* \can Y(E)} \]
induced by exterior derivative is injective.
\end{fct}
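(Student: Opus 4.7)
The proof I would propose is Hodge-theoretic: interpret both quotients as graded pieces of Deligne's mixed Hodge structure on the cohomology of $U \defn Y \setminus E$, and deduce injectivity of the induced $\d$ from strictness of the Hodge filtration.

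After shrinking $X$ to a sufficiently small Stein representative of the germ, $U$ is homotopy equivalent to the link of the singularity, and $H^\bullet(U, \C)$ carries a canonical mixed Hodge structure. Its Hodge filtration is computed by the stupid filtration $F^p = \Omegap Y{\ge p}(\log E)$ on the log de~Rham complex, and the associated log Hodge-to-de~Rham spectral sequence degenerates at $E_1$ by Deligne's theorem.

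To relate the two quotients in the statement to this picture, I would use the identifications $\can Y(E) = \Omegap Y 2(\log E)$ (valid on any smooth surface with SNC $E$) and $\Omegar X 1 = f_* \Omegap Y 1(\log E)$ (the reflexive extension theorem in dimension two). Applying $f_*$ to the residue short exact sequences
\[ 0 \to \Omegap Y 1 \to \Omegap Y 1(\log E) \to \bigoplus\nolimits_i \mathcal{O}_{E_i} \to 0, \qquad 0 \to \Omegap Y 2 \to \Omegap Y 2(\log E) \to \bigoplus\nolimits_i \Omegap{E_i}1(\log D_i) \to 0 \]
(with $D_i \defn (E - E_i)|_{E_i}$) embeds the quotients $\Omegar X 1 / f_* \Omegap Y 1$ and $\can X / f_* \can Y(E)$ into residue cohomology groups on $E$ that assemble into the first page of the log Hodge-to-de~Rham spectral sequence on $U$. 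Under this embedding, the geometric $\d$-map on quotients corresponds to a piece of a $d_1$-differential for this spectral sequence.

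Injectivity is then a formal consequence of strictness of the Hodge filtration on $H^\bullet(U, \C)$: a class $[\omega] \in \Omegar X 1 / f_* \Omegap Y 1$ whose differential vanishes in $\can X / f_* \can Y(E)$ lifts, by strictness, to a class in $F^1 H^1(U)$ already represented by a regular $1$-form on $Y$, hence by a section of $f_* \Omegap Y 1$; this forces $[\omega] = 0$. I expect the main technical obstacle to lie not in the principle of the argument but in the bookkeeping: one must carefully match the connecting homomorphisms of the two residue long exact sequences with the abstract $d_1$-differentials of the spectral sequence, and track residues through the crossing points of the SNC divisor $E$, where log $2$-forms on $Y$ restrict to $1$-forms on $E_i$ with additional log poles along $D_i$.
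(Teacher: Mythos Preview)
The paper does not give its own proof of this statement; it is quoted as a black-box fact from van Straten--Steenbrink \cite[Cor.~1.4]{SvS85}. Your Hodge-theoretic outline is very much in the spirit of the original argument there, which does derive injectivity from the strictness of the Hodge filtration on the cohomology of the link. So at the level of strategy you are on the right track.

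Two points in your sketch need tightening. First, your appeal to ``Deligne's theorem'' for $E_1$-degeneration is imprecise: Deligne's result concerns a smooth \emph{proper} variety with an SNC boundary, whereas your $Y$ is a resolution of a contractible Stein neighbourhood and hence non-compact. The mixed Hodge structure on $H^\bullet(U)$ (equivalently, on the link) and the relevant degeneration are obtained either by compactifying and restricting, or via Steenbrink's local adaptation of Deligne's theory; this is exactly the technical content of \cite{SvS85}. It is not a gap in the idea, but the route from ``Deligne'' to the local statement is where the work lies, and you should not present it as immediate.

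Second, the identification $\Omegar X1 = f_* \Omegap Y1(\log E)$ that you invoke as ``the reflexive extension theorem in dimension two'' is itself a non-trivial Hodge-theoretic input: it says that every reflexive $1$-form on a normal surface germ acquires at worst logarithmic poles on a log resolution. This is true (for isolated singularities of dimension $n$ and $p$-forms with $p \le n-1$, due to Steenbrink), but it is proved by the same circle of ideas you are using. Be explicit about where you take it from, so that your argument is not inadvertently circular.
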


\section{Proof of \cref{lowgenus lz}}

Let $\set{ v_1, v_2 }$ be a local basis of $\T X$ and let $\set{ \alpha_1, \alpha_2 }$ be the dual basis of $\Omegar X1$, defined by $\alpha_i(v_j) = \delta_{ij}$.
Furthermore, let $f \from Y \to X$ be the functorial resolution of $X$ and $E \subset Y$ its exceptional locus.
We isolate the following observation from the proof of~\cite[Thm.~1.1]{LowGenusLZ1}, to which we also refer for more details.

\begin{obs} \label{ext obs}
If the basis $\set{ \alpha_1, \alpha_2 }$ can be chosen in such a way that say $\d\alpha_2 \in f_* \can Y(E)$, that is, $f^*(\d\alpha_2)$ has at most simple poles along $E$, then $\germ0X$ is smooth.
\end{obs}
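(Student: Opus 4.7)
My plan is to upgrade the hypothesis on $d\alpha_2$ into a regularity statement for $\alpha_2$ on $Y$ via \cref{SvS}, and then to exclude the existence of any exceptional divisor by a chase of the duality identities with a lift of $v_2$.

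First I would invoke \cref{SvS}. The hypothesis $d\alpha_2 \in f_*\can Y(E)$ says precisely that $d\alpha_2$ is zero in $\can X / f_*\can Y(E)$, so by the injectivity stated there the class of $\alpha_2$ in $\Omegar X1 / f_*\Omegap Y1$ is also zero. Hence $\alpha_2 \in f_*\Omegap Y1$; equivalently, $\tilde\alpha_2 \defn f^*\alpha_2$ extends across $E$ as a regular $1$-form on $Y$.

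Next, I would lift $v_1, v_2$ to log vector fields $\tilde v_1, \tilde v_2 \in \Hnought Y.\T Y(-\log E).$ via \cref{functorial res}. Via the natural pairing $\Omegap Y1 \tensor \T Y(-\log E) \to \O Y$, the dual-basis identities $\tilde\alpha_i(\tilde v_j) = \delta_{ij}$ hold on the dense open $Y \setminus E$, and as both sides are regular on $Y$ they extend to the whole of $Y$; in particular, $\tilde\alpha_2(\tilde v_2) \equiv 1$.

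I then argue, for contradiction, that $E$ must be empty. At any crossing point $p$ of two components of $E$, a local-coordinate computation shows that every section of $\T Y(-\log E)$ vanishes as a tangent vector at $p$: in coordinates where $E = \{xy = 0\}$, both generators $x\partial_x$ and $y\partial_y$ of $\T Y(-\log E)$ vanish at the origin. Pairing with $\tilde\alpha_2$ therefore gives $\tilde\alpha_2(\tilde v_2)(p) = 0$, contradicting the identity $\equiv 1$. Hence $E$ has no nodes, and being connected it is either empty (giving smoothness, as desired) or a single smooth irreducible curve $E_0$. In the latter case, $\tilde v_2$ is tangent to $E_0$, and the relation $\tilde\alpha_2(\tilde v_2)|_{E_0} = 1$ forces its tangential part to be nowhere vanishing; hence $\chi(E_0) = 0$ and $E_0$ is elliptic.

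The main obstacle is the final step: ruling out this simple-elliptic configuration. Here the plan is to exploit the second half of the basis. Freeness of $\T X$ forces $\can X \isom \O X$, so a standard discrepancy calculation yields $\can Y \isom \O Y(-E_0)$ near $E_0$. The pairing relations at $E_0$ then show that $\tilde v_1$ vanishes to first order along $E_0$ (as a regular vector field on $Y$), while the dual form $\tilde\alpha_1$ has a simple logarithmic pole along $E_0$ with nowhere-vanishing residue. Careful bookkeeping of the various twists by $\O Y(\pm E_0)$, combined with the vanishing $\HH0.E_0.N_{E_0/Y}. = 0$ coming from the negativity $E_0^2 < 0$, produces the desired contradiction; this is the step spelled out in~\cite[Proof~of~Thm.~1.1]{LowGenusLZ1}. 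Hence $E = \emptyset$ and $(X, 0)$ is smooth.
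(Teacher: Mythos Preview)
Your argument is correct and follows essentially the same route as the paper's sketch: apply \cref{SvS} to extend $\alpha_2$ holomorphically, lift $v_2$ via \cref{functorial res}, use $\wt\alpha_2(\wt v_2)\equiv 1$ to force $\wt v_2$ nowhere vanishing, and deduce that $E$ is empty or a single smooth elliptic curve. The only divergence is in the endgame: where you unpack the simple-elliptic case by hand (vanishing of $\wt v_1$ along $E_0$, log pole of $\wt\alpha_1$, negativity of $N_{E_0/Y}$, deferring to~\cite{LowGenusLZ1}), the paper simply observes that such an $(X,0)$ is log canonical and invokes~\cite[Cor.~1.3]{GK13} directly --- a cleaner black box that subsumes your computation.
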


\begin{proof}[Sketch of proof]
By \cref{SvS}, we see that $\alpha_2 \in f_* \Omegap Y1$, that is, $f^* \alpha_2$ extends to a holomorphic $1$-form $\wt\alpha_2$ on $Y$.
On the other hand, $v_2$ extends to a holomorphic vector field $\wt v_2$ on $Y$ tangent to $E$, by \cref{functorial res}.
As $\wt\alpha_2(\wt v_2)$ is identically one, $\wt v_2$ cannot have any zeroes.
It follows that $E$, if non-empty, consists of a single smooth elliptic curve.
Hence $\germ0X$ is log canonical and we may apply~\cite[Cor.~1.3]{GK13}.
(We could also appeal to the argument in~\cite[(1.6)]{SvS85}, or in fact even do this case completely by hand.)
\end{proof}

\begin{clm} \label{dim estimate}
We have $\dim \factor{\can X}{f_* \can Y(E)} = p_g - g - b$.
\end{clm}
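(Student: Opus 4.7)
Shrinking if necessary, I would assume $X$ is Stein, so that the relevant stalks at~$0$ are just global sections. The plan is to break up $\can X / f_* \can Y(E)$ into two pieces using the intermediate sheaf $f_* \can Y$, and compute each piece separately. Specifically, from the inclusions $f_* \can Y \subset f_* \can Y(E) \subset \can X$ one gets
\[ \dim \factor{\can X}{f_* \can Y(E)} \;=\; \dim \factor{\can X}{f_* \can Y} \;-\; \dim \factor{f_* \can Y(E)}{f_* \can Y}. \]

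For the first term on the right, I would invoke Grothendieck/local duality on the surface singularity $\germ0X$: the cokernel $\can X / f_* \can Y$ is $\C$-dually paired with $\bigl(\RR1.f.\O Y.\bigr)_0$, so its dimension equals $p_g$. (This is essentially the standard comparison between the geometric genus defined via $\RR1.f.\O Y.$ and via the Grauert--Riemenschneider dualising sheaf.)

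For the second term, I would use the adjunction sequence on the smooth surface $Y$ associated with the reduced SNC divisor $E$,
\[ 0 \lto \can Y \lto \can Y(E) \lto \can E \lto 0, \]
push it forward by $f$, and note that $\RR1.f.\can Y. = 0$ by Grauert--Riemenschneider. Thus
\[ \dim \factor{f_* \can Y(E)}{f_* \can Y} \;=\; \hh 0.E.\can E. \;=\; \hh 1.E.\O E. \]
by Serre duality on the proper curve~$E$. It remains to compute $\hh 1.E.\O E.$ in terms of~$g$ and~$b$. For this I would use the normalization sequence
\[ 0 \lto \O E \lto \bigoplus_{i=1}^r \O{E_i} \lto \bigoplus_{p \in \Sing E} \C_p \lto 0, \]
where the rightmost sum runs over the nodes of~$E$ (which correspond bijectively to the edges of~$\Delta$). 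Taking cohomology and using that $E$ is connected (hence $\hh 0.E.\O E. = 1$) yields $\hh1.E.\O E. = g + E_\Delta - r + 1 = g + b_1(\Delta) = g + b$.

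Subtracting the two contributions gives $p_g - g - b$, as claimed. The only real point requiring attention is the identification $\dim \can X / f_* \can Y = p_g$ and the vanishing $\RR1.f.\can Y. = 0$ in the analytic/local setting; both are classical once $X$ has been shrunk to a Stein representative of the germ. Everything else is routine bookkeeping with the normalization sequence and the dual graph.
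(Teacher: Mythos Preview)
Your proposal is correct and follows essentially the same route as the paper: the same filtration $f_*\can Y \subset f_*\can Y(E) \subset \can X$, the identification $\dim\bigl(\can X / f_*\can Y\bigr) = p_g$ (the paper cites \cite[Prop.~4.45(6)]{KM98} rather than phrasing it as local duality), the residue/adjunction sequence together with Grauert--Riemenschneider vanishing to get $\dim\bigl(f_*\can Y(E)/f_*\can Y\bigr) = \hh1.E.\O E.$, and the normalization computation $\hh1.E.\O E. = g + b$. The only cosmetic difference is that the paper states the last identity as $g + |\Sing E| - r + 1$ and then observes $|\Sing E| - r + 1 = b_1(\Delta)$ for a connected graph, while you fold this into the normalization sequence directly.
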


\begin{proof}
Consider the short exact sequence
\[ 0 \lto \underbrace{\factor{f_* \can Y(E)}{f_* \can Y}}_{\ndef \sK} \lto \factor{\can X}{f_* \can Y} \lto \factor{\can X}{f_* \can Y(E)} \lto 0. \]
The middle term has dimension exactly $p_g$ by~\cite[Prop.~4.45(6)]{KM98}.
Hence it suffices to show that $\dim \sK = g + b$.
To this end, consider the residue sequence
\[ 0 \lto \can Y \lto \can Y(E) \lto \can E \lto 0. \]
Since $\RR1.f.\can Y. = 0$ by Grauert--Riemenschneider vanishing~\cite[Thm.~2.20.1]{Kol07}, and since $E$ is Cohen--Macaulay, we get $\dim \sK = \hh0.E.\can E. = \hh1.E.\O E.$.
A standard computation on the normalization of $E$ yields
\[ \hh1.E.\O E. = g + |\Sing E| - r + 1. \]
In terms of the dual graph $\Delta = \Delta(E)$, clearly $r$ is the number of vertices and $|\Sing E|$ is the number of edges.
But it is a general fact\footnote{Proof: Let $T \subset G$ be a maximal subtree. Then $T$ has exactly $r - 1$ edges. The map $G \to \factor GT$ is a homotopy equivalence, and $\factor GT$ is a wedge sum of $n - (r - 1)$ circles.} that the first Betti number of a (connected, undirected) graph $G$ with $r$ vertices and $n$ edges is $n - r + 1$.
So $\hh1.E.\O E. = g + b$, as desired.
\end{proof}

\begin{clm} \label{gorenstein}
The $2$-form $\sigma \defn \alpha_1 \wedge \alpha_2$ is a generator of $\can X$.
In particular, $X$ is Gorenstein.
\end{clm}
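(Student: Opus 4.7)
The plan is to identify $\can X$ with the manifestly free sheaf $\bigwedge^2 \Omegar X 1$ via the natural wedge product map, thereby exhibiting $\sigma$ as a generator.

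First, I observe that since $\T X$ is free of rank two with basis $\{v_1, v_2\}$, its dual $\Omegar X 1 = \sHom \T X.\O X.$ is likewise free of rank two, admitting $\{\alpha_1, \alpha_2\}$ as a basis. Hence $\bigwedge^2 \Omegar X 1$ is free of rank one, with $\alpha_1 \wedge \alpha_2$ as a generator.

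Second, I construct a natural $\O X$-linear map $\Phi \from \bigwedge^2 \Omegar X 1 \to \can X$. On the smooth locus $X^\circ$, the wedge product of one-forms furnishes the tautological identification $\bigwedge^2 \Omegap{X^\circ}1 \bij \Omegap{X^\circ}2$; since $\Omegar X 1 = i_* \Omegap{X^\circ}1$ and $\can X = i_* \Omegap{X^\circ}2$, where $i \from X^\circ \inj X$ is the inclusion, this isomorphism pushes forward to the desired map $\Phi$, which sends the free generator $\alpha_1 \wedge \alpha_2$ of the source to the element $\sigma$ of the target.

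Finally, I check that $\Phi$ is a global isomorphism. Both source and target are reflexive sheaves of rank one, and $\Phi$ restricts to an isomorphism over $X^\circ$ by construction. Since $X$ is a normal surface, $\Sing X = \{0\}$ has codimension two in $X$, and a morphism between reflexive sheaves that is an isomorphism off a codimension-two subset is itself an isomorphism. This yields that $\sigma$ generates $\can X$ as an $\O X$-module; in particular $\can X$ is invertible and $\germ 0 X$ is Gorenstein. I do not anticipate any real obstacle here, as the argument relies only on freeness of $\T X$ and standard properties of reflexive sheaves on normal varieties.
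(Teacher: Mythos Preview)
Your argument is correct and follows essentially the same route as the paper: both proofs exhibit a map to $\can X$ that is an isomorphism over the smooth locus and then invoke reflexivity on the normal surface $X$ to extend across the singular point. The only cosmetic difference is that the paper maps $\O X \to \can X$ directly via $1 \mapsto \sigma$, whereas you factor through $\bigwedge^2 \Omegar X 1$; since that sheaf is free with generator $\alpha_1 \wedge \alpha_2$, this amounts to the same thing.
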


\begin{proof}
Define a map $\O X \to \can X$ by sending $1 \mapsto \sigma$.
This is an isomorphism on the smooth locus $X \setminus \set0$.
Then it is an isomorphism everywhere, as $X$ is normal and the sheaves $\O X$ and $\can X$ are reflexive.
\end{proof}

By \cref{gorenstein}, every element in $\factor{\can X}{f_*\can Y(E)}$ can be written as (the class of) $\rho \cdot \sigma$ for some holomorphic function germ $\rho \in \O{X,0}$.
If $\frm \defn \frm_0 \subset \O{X,0}$ is the maximal ideal, consider the linear subspace
\[ \factor{\frm\,\can X}{f_* \can Y(E)}
  = \set{ \, \rho \cdot \sigma \;|\; \rho(0) = 0 \, }
  \subset \factor{\can X}{f_* \can Y(E)}. \]
Unless $\factor{\can X}{f_* \can Y(E)} = 0$, this subspace has codimension one.
In any case, it has dimension $\le 1$ by \cref{dim estimate} and the assumption $p_g - g - b \le 2$.
(This is the only place where that assumption is used.)
Thus if the images of $\d\alpha_1$ and $\d\alpha_2$ are both contained in $\factor{\frm\,\can X}{f_* \can Y(E)}$, they are linearly dependent, say $\d\alpha_1 + \lambda \cdot \d\alpha_2 = 0$ for some $\lambda \in \C$.
Considering the basis $\set{ \alpha_1 + \lambda \alpha_2, \alpha_2 }$ of $\Omegar X1$, we can apply \cref{ext obs} to conclude that $\germ0X$ is smooth.
After possibly interchanging $\alpha_1$ and~$\alpha_2$, we may hence without loss of generality make the following

\begin{awlog} \label{one}
We have $\d\alpha_1 \not\in \frm\,\can X$.
\end{awlog}

Writing $\d\alpha_j = \rho_j \cdot \sigma$ for suitable $\rho_j \in \O{X,0}$, we thus have that $\rho_1 \not\in \frm$ is a unit.
So replacing $\alpha_2$ by $\rho_1 \alpha_2$ does not destroy the property of $\set{ \alpha_1, \alpha_2 }$ being a basis of~$\Omegar X1$.
After this replacement, $\d\alpha_1 = \sigma$.
Furthermore, note that
\[ \d \big( \underbrace{\alpha_2 - \rho_2(0) \alpha_1}_{\ndef \alpha_2'} \big) = \big( \underbrace{\rho_2 - \rho_2(0)}_{\in \frm} \big) \cdot \sigma, \]
and that we may replace $\alpha_2$ by $\alpha_2'$, again without destroying the basis property.
Summing up, this leads to the following simplification of our setting.

\begin{awlog} \label{two}
We have that $\d\alpha_1 = \sigma$ and $\d\alpha_2 \in \frm\,\can X$.
In other words, $\rho_1 \equiv 1$ and $\rho_2 \in \frm$.
\end{awlog}

We will also assume from now on that $\germ0X$ is not smooth, as otherwise there is nothing to prove.
Consider the $1$-form $\rho_2\alpha_1$.
A short calculation shows that $\d(\rho_2 \alpha_1) = \big( \rho_2 - v_2(\rho_2) \big) \cdot \sigma$, which by \cref{derivations} and \cref{two} defines an element in the at most one-dimensional vector space $\factor{\frm\,\can X}{f_* \can Y(E)}$.
If that element is nonzero, then there is a constant $\lambda \in \C$ with
\[ \d( \alpha_2 + \lambda \rho_2 \alpha_1 ) = \d \alpha_2 + \lambda \d( \rho_2 \alpha_1 ) = 0 \in \factor{\frm\,\can X}{f_* \can Y(E)} \subset \factor{\can X}{f_* \can Y(E)}. \]
Because $\set{ \alpha_1, \alpha_2 + \lambda \rho_2 \alpha_1 }$ is a basis of $\Omegar X1$, we can again apply \cref{ext obs} and then we are done.
Hence we may without loss of generality impose the following

\begin{awlog} \label{three}
We have $\d(\rho_2 \alpha_1) \in f_* \can Y(E)$.
\end{awlog}

For any function/differential form/vector field on $X$, we denote its lift to~$Y$ as a holomorphic or meromorphic object by a tilde.
Thus we have, for example, $\d \wt\alpha_1 = \wt\sigma = \wt\alpha_1 \wedge \wt\alpha_2$ and $\d\wt\alpha_2 = \wt\rho_2 \cdot \wt\sigma$.
Furthermore we let $\frE$ be the set of irreducible components of $E$ and we put
\begin{align*}
\frE^{\le 1} & \defn \set{ \, P \in \frE \;|\; \text{$\wt v_2$ vanishes to order at most $1$ along $P$} \, }, \\
\frE^{\ge 2} & \defn \set{ \, P \in \frE \;|\; \text{$\wt v_2$ vanishes to order at least $2$ along $P$} \, }.
\end{align*}
The order of vanishing, of course, refers to the largest integer $k$ such that locally near a general point of $P$, we can write $\wt v_2 = w^k v'$, where $w$ is a local defining equation for $P$ and $v'$ is a holomorphic vector field.

Let $P \in \frE^{\ge 2}$ be arbitrary, pick a point $p \in P$ not contained in any other component of $E$, and choose local holomorphic coordinates $z, w$ around $p$ such that locally $P = \set{ w = 0 }$.
We can then write $\wt v_2 = w^2 v'$ for some holomorphic vector field $v'$ defined near~$p$.
The $1$-form $\alpha_2 - \rho_2 \alpha_1$ satisfies
\[ \d( \alpha_2 - \rho_2 \alpha_1 ) = \big[ \rho_2 - \big( \rho_2 - v_2(\rho_2) \big) \big] \cdot \sigma
 = v_2(\rho_2) \cdot \sigma, \]
and the order of vanishing of $f^* \big( v_2(\rho_2) \big) = \wt v_2(\wt\rho_2) = w^2 v'(\wt\rho_2)$ along $P$ is strictly larger than the vanishing order of $\wt\rho_2$ along $P$.
Hence after replacing $\alpha_2$ by $\alpha_2 - \rho_2 \alpha_1$ finitely often, the $2$-form $\d\wt\alpha_2 = \wt\rho_2 \cdot \wt\sigma$ will be holomorphic at a general point of $P$.
This argument applies simultaneously to all $P \in \frE^{\ge 2}$ and we arrive at the

\begin{awlog} \label{four}
The $2$-form $\d\wt\alpha_2$ does not have a pole along any exceptional curve $P \in \frE^{\ge 2}$.
\end{awlog}

The next claim analogously deals with $\frE^{\le 1}$.
We stress that its proof relies only on \cref{three}, but not on \cref{four}.

\begin{clm} \label{five}
Along any curve $P \in \frE^{\le 1}$, the form $\d\wt\alpha_2$ has at worst a simple pole.
\end{clm}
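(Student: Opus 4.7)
The strategy mirrors the setup used to establish \cref{four}, but aims for the weaker conclusion of a simple pole rather than holomorphicity. Fix $P \in \frE^{\le 1}$ and a general point $p \in P$ lying on no other component of $E$, and pick local coordinates $z, w$ near $p$ with $P = \set{w = 0}$. Being a log vector field, $\wt v_2$ can be written as $\wt v_2 = A \partial_z + w B \partial_w$ with $A, B$ holomorphic. The condition $P \in \frE^{\le 1}$ translates to: either $A|_P \not\equiv 0$ (case $k = 0$), or $A = wA'$ and the holomorphic vector field $A' \partial_z + B \partial_w$ does not vanish identically on $P$ (case $k = 1$).

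First, I would invoke the identity $\d(\alpha_2 - \rho_2 \alpha_1) = v_2(\rho_2)\, \sigma$ that was derived in the preparation of \cref{four}. Combined with \cref{three}, this reduces the claim to showing that the $2$-form $\wt v_2(\wt\rho_2) \wt\sigma$ has at most a simple pole along $P$. By \cref{derivations} the function $v_2(\rho_2)$ lies in $\frm$, so $\wt v_2(\wt\rho_2)$ vanishes along all of $E$; in particular it has positive order of vanishing along $P$.

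Next comes the key calculation: write $\wt\rho_2 = w^\ell h$ with $\ell \ge 1$ and $h|_P \not\equiv 0$, and express $\wt\sigma = (wD)^{-1}\, dz \wedge dw$, where $\wt v_1 = A^{(1)} \partial_z + w B^{(1)} \partial_w$ and $D \defn A^{(1)} B - A B^{(1)}$ is the log-determinant of $\wt v_1, \wt v_2$ in the frame $(\partial_z, w\partial_w)$. Explicit differentiation of $\wt\rho_2$ along $\wt v_2$ gives a leading expansion along $P$ which, in each of the subcases $k = 0$ and $k = 1$, one compares with $\ord_P D$ and with the bound ``$(\wt\rho_2 - \wt v_2(\wt\rho_2))\wt\sigma$ has order $\ge -1$'' provided by \cref{three}.

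The main obstacle I anticipate is the degenerate subcase in which the leading coefficients of $\wt\rho_2$ and $\wt v_2(\wt\rho_2)$ cancel exactly on $P$, so that the bound from \cref{three} becomes uninformative about $\ord_P D$. In this situation the condition $k \le 1$ has to be used more carefully, possibly after a finite iteration of the replacement $\alpha_2 \leadsto \alpha_2 - \rho_2 \alpha_1$ (the same device used for $\frE^{\ge 2}$ in \cref{four}), to extract the desired simple-pole estimate. This is the step where the proof will require the greatest care.
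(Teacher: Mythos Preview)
Your reduction via the identity $\d(\alpha_2 - \rho_2\alpha_1) = v_2(\rho_2)\,\sigma$ and \cref{three} is correct, but the argument stalls precisely at the ``degenerate subcase'' you flag, and the proposed iteration does not rescue it. The replacement $\alpha_2 \leadsto \alpha_2 - \rho_2\alpha_1$ sends $\rho_2$ to $v_2(\rho_2)$, and for $P \in \frE^{\le 1}$ one typically has $\ord_P \wt v_2(\wt\rho_2) = \ord_P \wt\rho_2$ (e.g.\ take $\wt v_2 = \partial_z$ and $\wt\rho_2 = e^z w^\ell$), so the vanishing order never increases and you cycle indefinitely. This is exactly the opposite of what happens for $\frE^{\ge 2}$, where the extra factor of $w$ forces the order up; the mechanism that makes \cref{four} work is simply absent here.

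The missing idea is to use \cref{SvS}, not merely \cref{three}. From \cref{three} and \cref{SvS} one gets the much stronger statement that $\wt\rho_2\,\wt\alpha_1$ extends \emph{holomorphically} to $Y$. The paper then exploits $\ord_P \wt v_2 \le 1$ to write $\wt v_2 = h\,v'$ near a suitable point $q \in P$, with $v'(q) \ne 0$ and $\ord_P h \le 1$; straightening $v' = \partial_x$ forces $\wt\alpha_1 = g_{12}\,\d y$ (since $\wt\alpha_1(\wt v_2) = 0$), whence $\wt\rho_2 g_{12}$ is holomorphic and $\d\wt\alpha_2 = -h^{-1}\wt\rho_2 g_{12}\,\d x \wedge \d y$ has pole order at most $\ord_P h \le 1$. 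Equivalently, in your notation the holomorphicity of $\wt\rho_2\,\wt\alpha_1$ forces $\ord_P D \le \ell$, which is exactly the estimate you were trying to extract from \cref{three} alone.
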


\begin{proof}[Proof of \cref{five}]
Pick a component $P \in \frE^{\le 1}$ and let $p \in P$ and $z, w$ be as before.
There are holomorphic functions $a$ and $b$ near $p$ such that locally $\wt v_2 = a \frac\del{\del z} + b \frac\del{\del w}$.
Using Taylor expansion, we may write
\begin{align*}
a(z,w) & = a_0(z) + a_1(z) w + \cdots \qquad \text{and} \\
b(z,w) & = b_0(z) + b_1(z) w + \cdots,
\end{align*}
where the dots stand for terms of order at least $2$ in $w$ and $a_j$, $b_j$ are appropriate local holomorphic functions in one variable.
Since $\wt v_2$ is logarithmic with respect to $P = \set{ w = 0 }$, we in fact have $b_0 \equiv 0$.
As $P \in \frE^{\le 1}$, not all of $a_0$, $a_1$, $b_1$ can be identically zero.
\begin{itemize}
\item If $a_0 \not\equiv 0$, there is a point $q \in P$ near $p$ with $a_0(q) \ne 0$.
\item If $a_0 \equiv 0$, we may locally write $\wt v_2 = w \cdot v'$ with $v'$ holomorphic.
Since $a_1 \not\equiv 0$ or $b_1 \not\equiv 0$, there is a point $q \in P$ near $p$ with $v'(q) \ne 0$.
\end{itemize}
In both cases, locally near $q$ we have $\wt v_2 = h \cdot v'$ for a holomorphic function $h$ and a holomorphic vector field $v'$ with $v'(q) \ne 0$.
What is more, the function $h$ (which is either identically one, or equal to $w$) vanishes of order at most one along $P$.
Since $v'(q) \ne 0$, there exist local holomorphic coordinates $x, y$ near $q$ such that locally $v' = \frac\del{\del x}$ and thus $\wt v_2 = h \cdot \frac\del{\del x}$.

There are local meromorphic functions $g_{ij}$ such that with respect to the local coordinates $x, y$ we have $\wt\alpha_1 = g_{11} \, \d x + g_{12} \, \d y$ and $\wt\alpha_2 = g_{21} \, \d x + g_{22} \, \d y$.
Because $\wt\alpha_i(\wt v_2) = \delta_{i,2}$, we have in fact %$g_{11} = 0$ and $g_{21} = h\inv$.
$\wt\alpha_1 = g_{12} \, \d y$ and $\wt\alpha_2 = h\inv \, \d x + g_{22} \, \d y$.
Thus by~\cref{two}
\begin{align*}
\phantom{\text{and} \qquad}
\d\wt\alpha_1 & = \wt\sigma = \wt\alpha_1 \wedge \wt\alpha_2 = -\frac{g_{12}}h \, \d x \wedge \d y \qquad \text{and} \\
\d\wt\alpha_2 & = \wt\rho_2 \cdot \wt\sigma = -\frac{\wt\rho_2 \, g_{12}}h \, \d x \wedge \d y.
\end{align*}
According to \cref{three} and \cref{SvS}, $f^*(\rho_2 \alpha_1) = \wt\rho_2 \, \wt\alpha_1 = \wt\rho_2 \, g_{12} \, \d y$ extends to a holomorphic $1$-form on $Y$.
In particular, $\wt\rho_2 \, g_{12} \, \d y$ has no pole along $P$ and therefore $\wt\rho_2 \, g_{12}$ is holomorphic.
This implies that $\d \wt\alpha_2 = - h\inv \, \wt\rho_2 \, g_{12} \, \d x \wedge \d y$ has at most a pole of order one along $P$, as desired.
\end{proof}

Taken together, \cref{four} and \cref{five} show that $\d\wt\alpha_2$ has at worst first order poles along any exceptional curve $P \in \frE = \frE^{\le 1} \cup \frE^{\ge 2}$.
In other words, we have $\d\alpha_2 \in f_* \can Y(E)$.
Applying once again \cref{ext obs}, we get that $(X,0)$ is smooth. \qed

\section{Proof of \cref{TX gen nef}}

By~\cite[Thm.~3]{Lip65}, $X$ is normal.
Let $f \from S \to X$ be the minimal resolution (i.e.~$K_S$ is $f$-nef).
We may assume that $X$ is not smooth.
Under this additional assumption, one shows as in the proof of~\cite[Claim~4.2]{LowGenusLZ1} that
\[ \hh0.X.{\RR1.f.\O S.}. = \sum_{x \in \Sing X} \pg Xx \le 2. \]
Hence every singular point of $X$ satisfies $p_g - g - b \le p_g \le 2$.
We conclude by \cref{lowgenus lz} that $X$ is smooth. \qed

\section{Proof of \cref{TX twisted}}

The following proposition, probably well-known to experts, will greatly simplify the proof.

\begin{prp}[Surfaces carrying a divisor homologous to zero] \label{surface hom zero}
Let $S$ be a smooth compact complex surface containing a nonzero effective divisor $D$ with
\[ \cc1D \defn \cc1{\O S(D)} = 0 \in \HH2.S.\R.. \]
Then either
\begin{enumerate}
\item the Kodaira dimension $\kappa(S) \le 0$, or
\item we have $\kappa(S) = 1$ and $\chi(\O S) = 0$.
\end{enumerate}
\end{prp}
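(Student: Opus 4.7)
The key observation is that the hypothesis forces $S$ to be non-K\"ahler. Indeed, if $\omega$ were a K\"ahler form on $S$, then
\[ \int_D \omega = \cc 1 D \cdot [\omega] = 0, \]
contradicting the strict positivity of $\int_D \omega$ that comes from $D \ge 0$, $D \ne 0$. Since smooth compact surfaces of general type are projective and hence K\"ahler, this immediately rules out $\kappa(S) = 2$. The cases $\kappa(S) \le 0$ fall under conclusion~(1), so it remains to treat $\kappa(S) = 1$.

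In that case I would pass to a minimal model $S_{\min}$, noting that contracting a $(-1)$-curve on a compact complex surface preserves both the K\"ahler property (equivalent, for surfaces, to $b_1$ being even) and the holomorphic Euler characteristic $\chi(\O S)$. Thus without loss of generality $S$ itself is a minimal non-K\"ahler properly elliptic surface with fibration $\pi \from S \to C$. Since $K_S^2 = 0$ on such a surface, Noether's formula reduces the problem to showing $\cc 2 S = 0$. By the Enriques--Kodaira classification, a minimal properly elliptic surface with $\chi(\O S) > 0$ is necessarily projective (hence K\"ahler); contrapositively, a non-K\"ahler one must satisfy $\chi(\O S) = 0$, as required.

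The main obstacle I anticipate is precisely this last invocation. The equivalence, in the minimal properly elliptic regime, between non-K\"ahlerness and $\chi(\O S) = 0$ is a standard but nontrivial consequence of the structure theory of elliptic fibrations (via the $j$-invariant and singular fibre types), documented in \cite{BHPV04}. An alternative, more hands-on route would observe that every component of $D$ must be contained in a fibre of $\pi$, since $D \cdot F_{\mathrm{gen}} = 0$ forces each component to pair trivially with the general fibre. Zariski's lemma then expresses $D$ numerically as a $\Q$-linear combination of whole fibres, so $\cc 1 D = 0$ in $\HH 2.S.\R.$ yields $[F] = 0$ there; combined with the canonical bundle formula $K_S = \pi^*(K_C + L) + \sum (m_i - 1) F_i^{\mathrm{red}}$, in which $\deg L = \chi(\O S)$, this should independently recover $\chi(\O S) = 0$.
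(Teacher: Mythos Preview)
Your argument is essentially correct but differs from the paper's route, and there are two small wrinkles worth flagging. First, ``$\chi(\O S) > 0 \Rightarrow$ projective'' is too strong: a minimal properly elliptic surface with $\chi > 0$ can have algebraic dimension~$1$ (take a non-isotrivial elliptic fibration over $\PP1$ without a section). What is true, and what you actually need, is ``$\chi > 0 \Rightarrow$ K\"ahler'': from the Leray sequence $h^{0,1}(S) = g(C) + h^0(C, R^1\pi_*\O S)$, and $\deg R^1\pi_*\O S = -\chi < 0$ forces $h^{0,1} = g(C)$, while $h^{1,0} \ge g(C)$ via $\pi^*\Omega^1_C$; since non-K\"ahler would mean $h^{1,0} = h^{0,1} - 1 = g(C) - 1$, this is impossible. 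Second, the contrapositive you take only yields $\chi \le 0$; you should add the one line $12\chi = K_S^2 + c_2(S) = e(S) \ge 0$ (all singular fibres of a relatively minimal elliptic fibration have non-negative Euler number) to pin down $\chi = 0$.

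The paper proceeds differently. It also reduces to a minimal model, but rather than passing through ``non-K\"ahler'' it keeps track of the divisor: one checks $D_0 \coloneqq \pi_* D \ne 0$ (else $D$ would be $\pi$-exceptional, forcing $D^2 < 0$) and $c_1(D_0) = 0$. For $\kappa = 1$ the paper then combines $\chi \ge 0$, obtained from $\deg_C \phi_*\omega_{S/C} \ge 0$, with $\chi \le 0$, which it imports as a black box from Campana--Demailly--Peternell~\cite[Cor.~1.2]{CDP98}. Your approach trades that external citation for a direct Hodge-theoretic computation on elliptic surfaces; the paper's is shorter on the page but less self-contained. Your alternative via Zariski's lemma and the canonical bundle formula would also work, though it is the most roundabout of the three.
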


\begin{proof}
Let $\pi \from S \to S_0$ be a minimal model, and set $D_0 \defn \pi_* D$.
Then $\cc1{D_0} = 0$ by \cref{c1 pushforward} below.
Furthermore $D_0 \ne 0$, as otherwise $D$ would be $\pi$-exceptional and hence $D^2 < 0$ by negative-definiteness of the intersection form~\cite[Thm.~III.2.1]{BHPV04}, contradicting the fact that $D^2 = \cc1D^2 = 0$.
Also $\chi(\O S)$ remains unchanged when passing to $S_0$.
We may thus assume that $S$ is minimal.

If $\kappa(S) = 2$, then $\mathrm c_1^2(S) > 0$ and $S$ is projective~\cite[Thm.~IV.6.2]{BHPV04}.
Thus the divisor $D$ cannot exist.
If $\kappa(S) = 1$, then the pluricanonical map $\phi \from S \to C$ is a relatively minimal elliptic fibration.
In this case
\begin{align*}
\chi(\O S) & = \deg_C \big( \RR1.\phi.\O S. \big) \dual && \text{\cite[Prop.~V.12.2]{BHPV04}} \\
 & = \deg_C \big( \phi_* \can{S/C} \big) && \text{\cite[Thm.~III.12.3]{BHPV04}} \\
 & \ge 0. && \text{\cite[Thm.~III.18.2]{BHPV04}}
\end{align*}
On the other hand, we have $\chi(\O S) \le 0$ by~\cite[Cor.~1.2]{CDP98}.
We conclude that $\chi(\O S) = 0$.
\end{proof}

\begin{lem} \label{c1 pushforward}
Let $S$ be a smooth compact complex surface and $\pi \from S \to S'$ the blowing-down of a $(-1)$-curve.
If $\sL \in \Pic(S)$ is a line bundle with $\cc1\sL = 0$, then so is $\sL' \defn (\pi_* \sL) \ddual$, where $(-) \ddual$ denotes the reflexive hull (or double dual) of a coherent sheaf.
\end{lem}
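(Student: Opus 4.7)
The plan is to exploit the classical splitting of the Picard group under blowing up a smooth point on a smooth compact complex surface, and thereby reduce the lemma to a short bookkeeping exercise on $S'$ itself.

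First I would recall that, since $\pi$ contracts the single $(-1)$-curve $E$ to a smooth point, one has $\pi_* \sO_S = \sO_{S'}$ and $\RR i.\pi.\sO S. = 0$ for $i > 0$. Combined with the topological decomposition $\HH 2.S.\Z. \isom \pi^* \HH 2.S'.\Z. \oplus \Z \cdot [E]$ (coming from the fact that $S$ is diffeomorphic to $S' \# \overline{\PP 2}$) and the exponential exact sequence, this yields the well-known direct sum decomposition
\[ \Pic(S) = \pi^* \Pic(S') \oplus \Z \cdot \sO_S(E). \]
Writing $\sL \isom \pi^* \sM \tensor \sO_S(k E)$ accordingly, its first real Chern class satisfies $\cc1\sL = \pi^* \cc1\sM + k [E]$ in $\HH 2.S.\R.$.

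The next step would be to pin down $k$ and $\cc1\sM$ separately. Intersecting the identity $\cc1\sL = 0$ with $[E]$, using the projection formula $\pi^* \alpha \cdot [E] = \alpha \cdot \pi_* [E] = 0$ for every $\alpha \in \HH 2.S'.\R.$ together with $[E]^2 = -1$, immediately forces $k = 0$. Hence $\pi^* \cc1\sM = 0$, and injectivity of $\pi^*$ on real cohomology (which is built into the topological decomposition above) gives $\cc1\sM = 0$. In particular $\sL \isom \pi^* \sM$.

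Finally, by the projection formula together with $\pi_* \sO_S = \sO_{S'}$, one has $\pi_* \sL \isom \sM \tensor \pi_* \sO_S \isom \sM$ already as a line bundle, so no double-dualising is actually necessary: $\sL' = (\pi_* \sL)\ddual \isom \sM$ and therefore $\cc1{\sL'} = \cc1\sM = 0$, as required. I do not expect a genuine obstacle; the only point that might deserve a brief justification is the validity of the splitting of $\Pic(S)$ in the possibly non-\kahler compact complex category, but this is standard and follows from the vanishing of the higher direct images of $\sO_S$ together with the diffeomorphism type of the blow-up.
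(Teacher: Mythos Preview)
Your proof is correct and follows essentially the same route as the paper: both hinge on writing $\sL \isom \pi^*(\text{something}) \tensor \O S(kE)$, killing $k$ via $E^2 = -1$ (what the paper calls ``negative definiteness''), and then invoking injectivity of $\pi^*$ on $\HH2.{-}.\R.$. The only cosmetic difference is that the paper starts from $\sL'$ and shows $\sL = \pi^*\sL'$ directly, whereas you first decompose $\sL = \pi^*\sM$ via the Picard splitting and then identify $\sM$ with $\sL'$ using the projection formula; the underlying ingredients are identical.
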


\begin{proof}
Being a reflexive rank~$1$ sheaf on a smooth surface, $\sL'$ is locally free.
Thanks to negative definiteness again~\cite[Thm.~III.2.1]{BHPV04}, we have $\sL = \pi^* \sL'$ and hence $\pi^* \big( \cc1{\sL'} \big) = \cc1\sL = 0$.
As $\pi^* \from \HH2.S'.\R. \to \HH2.S.\R.$ is injective~\cite[Thm.~I.9.1(iv)]{BHPV04}, it follows that $\cc1{\sL'} = 0$.
\end{proof}

\begin{lem} \label{RR}
Let $\sL$ be a line bundle on the smooth projective curve $C$ of genus $g$.
\begin{enumerate}
\item\label{RR.1} If $\deg \sL = 0$, then $\hh0.C.\sL. = \hh0.C.\sL\dual.$.
\item\label{RR.2} If $\deg \sL = 2g - 2$ but $\sL$ is not isomorphic to $\can C$, then $\hh0.C.\sL. = g - 1$.
\end{enumerate}
\end{lem}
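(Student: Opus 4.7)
For part~\eqref{RR.1}, the plan is to observe that a nonzero global section of a degree~$0$ line bundle on a smooth projective curve can have no zeros, hence trivialises the bundle. Concretely, I would argue: if $\hh0.C.\sL. > 0$, then $\sL \isom \O C$ (and so both quantities in~\eqref{RR.1} equal $1$); otherwise $\hh0.C.\sL. = 0$, and $\hh0.C.\sL\dual. > 0$ would give $\sL\dual \isom \O C$ by the same reasoning, hence $\sL \isom \O C$, a contradiction. So both are zero. No actual computation is needed.

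For part~\eqref{RR.2}, the plan is to combine Riemann--Roch with Serre duality to reduce to part~\eqref{RR.1}. By Riemann--Roch,
\[ \hh0.C.\sL. - \hh1.C.\sL. = \deg \sL + 1 - g = g - 1. \]
By Serre duality, $\hh1.C.\sL. = \hh0.C.\can C \tensor \sL\dual.$. The line bundle $\sM \defn \can C \tensor \sL\dual$ has degree $\deg \can C - \deg \sL = 0$, and by hypothesis $\sM \not\isom \O C$ (which would be equivalent to $\sL \isom \can C$). Hence applying part~\eqref{RR.1} to $\sM$, together with the fact from the proof of~\eqref{RR.1} that a degree-zero bundle with a nonzero section is trivial, forces $\hh0.C.\sM. = 0$. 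Therefore $\hh1.C.\sL. = 0$, and the Riemann--Roch identity gives $\hh0.C.\sL. = g - 1$.

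Neither step presents a genuine obstacle; the only point that requires a touch of care is spotting that part~\eqref{RR.2} should be routed through Serre duality back to part~\eqref{RR.1}, rather than handled directly, so that the hypothesis $\sL \not\isom \can C$ is used at precisely the right place (namely, to conclude that $\can C \tensor \sL\dual$ is a nontrivial degree-zero line bundle).
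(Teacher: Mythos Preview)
Your proof is correct and follows essentially the same approach as the paper: for~\eqref{RR.1} both arguments note that a degree-zero line bundle with a section is trivial, and for~\eqref{RR.2} both combine Serre duality with Riemann--Roch applied to the nontrivial degree-zero bundle $\can C \tensor \sL\dual$ (the paper calls it $M$), differing only in the order in which these two tools are invoked.
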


\begin{proof}
\PreprintAndPublication{If $\sL$ is trivial, then both sides in~\labelcref{RR.1} equal $1$ and otherwise both sides are zero.
Turning to~\labelcref{RR.2}, we may write $\sL$ as $K_C - M$ with $M$ of degree zero but non-trivial.
Then $\hh0.C.\sL. = \hh1.C.M.$ by Serre duality.
But
\[ \hh1.C.M. = \hh0.C.M. - \chi(C, M) = - \chi(C, M) = g - 1 \]
by Riemann--Roch.}{
Well-known and hence left to the reader as an exercise.}
\end{proof}

We now turn to the proof of \cref{TX twisted}.
As in the previous corollary, we may assume that $X$ is normal, but not smooth.
Let $f \from S \to X$ be the functorial resolution and $\pi \from S \to S_0$ a run of the $K_S$-MMP.
\[ \xymatrix{
S \ar^-\pi[rr] \ar_-f[d] & & S_0 \\
X
} \]
By \cref{functorial res}, the twisted vector fields $v_i$ on $X$ lift to twisted vector fields $\wt v_i$ on~$S$.
These in turn can be pushed forward to twisted vector fields $v_i^0$ on $S_0$, by \cref{c1 pushforward}.
Furthermore, the Leray spectral sequence associated to $f_* \O S$ yields a five-term exact sequence
\begin{equation*}
\begin{aligned}
0 & \lto \HH1.X.\O X. \lto \HH1.S.\O S. \lto \HH0.X.{\RR1.f.\O S.}. \lto \\
  & \lto \HH2.X.\O X. \lto \HH2.S.\O S. \lto 0,
\end{aligned}
\end{equation*}
where the last map is Serre dual to $\HH0.S.\can S. \inj \HH0.X.\can X.$, and hence surjective.
We obtain an upper bound
\begin{myequation} \label{est}
\hh0.X.{\RR1.f.\O S.}. \le \hh1.S.\O S. + \hh0.X.\can X. - \hh0.S.\can S..
\end{myequation}

\begin{clm} \label{kahler}
Assume that $S$ has the property that every nonzero effective divisor $D \subset S$ satisfies $\cc1D \ne 0$.
(This applies in particular if $S$ is \kahler.)
Then $\hh0.X.\can X. \le 1$ and $\kappa(S) = -\infty$.
\end{clm}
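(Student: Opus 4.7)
The plan is to exploit the nonzero global section $\wt v_1 \wedge \wt v_2 \in \HH0.S.\can S\dual \tensor \sM.$, where $\sM \defn f^*(\sL_1 \tensor \sL_2)$; note $\cc1\sM = 0$ since pullback preserves first Chern classes. The hypothesis forbids any nonzero effective divisor on $S$ with vanishing first Chern class, and the strategy is to use this rigidity to force both conclusions at once.

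I would first record two preliminaries. (i) The section $\wt v_1 \wedge \wt v_2$ is nonzero, since the $v_i$ are linearly independent somewhere on $X$; let $D$ denote its zero divisor, so that $\O S(D) \isom \can S\dual \tensor \sM$. (ii) \emph{The divisor $D$ is strictly positive.} Indeed, $X$ is not smooth, hence the exceptional locus $E = \Exc(f)$ is nonempty, and by \cref{functorial res} the lifts $\wt v_i$ lie in $\HH0.S.\T S(-\log E) \tensor f^* \sL_i.$. At a general point $p$ of any irreducible component of $E$, both values $\wt v_i(p)$ then belong to the one-dimensional tangent line to that component (twisted by a line), so they are linearly dependent and $\wt v_1 \wedge \wt v_2$ vanishes along every component of $E$. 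Thus $D \ge E > 0$.

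For $\hh0.X.\can X. \le 1$: composing the injection $\HH0.X.\can X. \inj \HH0.X.\sL_1 \tensor \sL_2.$ from \cref{c1 zero} with pullback along the surjective map $f$ yields an injection $\HH0.X.\can X. \inj \HH0.S.\sM.$ sending $\omega \mapsto f^*\bigl(\omega \cdot (v_1 \wedge v_2)\bigr)$. By the hypothesis, any nonzero section of $\sM$ is nowhere vanishing (its zero divisor has vanishing first Chern class, hence equals zero), so the moment any nonzero section exists we must have $\sM \isom \O S$, and therefore $\hh0.S.\sM. \le 1$. For $\kappa(S) = -\infty$: if some nonzero $s \in \HH0.S.\can S^{\tensor m}.$ existed, then $s \cdot (\wt v_1 \wedge \wt v_2)^{\tensor m}$ would be a nonzero section of $\sM^{\tensor m}$ with zero divisor $D_s + m D$, where $D_s$ is the zero divisor of $s$. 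Since $\cc1{\sM^{\tensor m}} = 0$, the hypothesis forces $D_s + m D = 0$ and hence $D = 0$, contradicting~(ii).

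The only substantive step is the positivity of $D$; this is precisely where the singularity of $X$ and the logarithmic tangency property of the functorial resolution really enter. Once $D > 0$ is secured, the hypothesis on $S$ cleanly converts the slogan ``vanishing first Chern class'' into ``nowhere vanishing'', and the remainder is routine bookkeeping with sections and Chern classes.
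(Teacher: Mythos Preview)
Your proof is correct, and it takes a genuinely different route from the paper's.

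The paper argues on $X$ first: assuming $\kappa(X,K_X)\ge 0$, it picks $D_m\in|mK_X|$ and combines it with the zero divisor $D_{-1}$ of $v_1\wedge v_2$ on $X$ to produce an effective divisor in $|m(L_1+L_2)|$; pulling back to $S$ and applying the hypothesis forces $D_m=0$, so $K_X$ is torsion and $\hh0.X.\can X.\le 1$. For $\kappa(S)=-\infty$ the paper then passes to the \emph{minimal} resolution $S'$, writes $K_{S'}=f'^*K_X-E$, and invokes~\cite[Cor.~1.3]{GK13} to rule out $E=0$ (canonical singularities), concluding $\kappa(S')=\kappa(S',-E)=-\infty$.

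Your argument stays entirely on the functorial resolution $S$ and replaces the appeal to~\cite{GK13} by the direct observation $D\ge E>0$, which follows from the logarithmic tangency in \cref{functorial res}: at a smooth point $p\in E$ both $\wt v_i(p)$ land in the one-dimensional image of $\T S(-\log E)\to\T S$, so $\wt v_1\wedge\wt v_2$ vanishes along $E$. This single geometric fact simultaneously powers both conclusions, with no case distinction on $\kappa(X,K_X)$ and no passage to the minimal model. The trade-off is that the paper's argument yields the slightly sharper intermediate statement that $K_X$ is torsion, while yours is more self-contained and avoids an external reference.
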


\begin{proof}
If $\kappa(X, K_X) = -\infty$, then in particular $\hh0.X.\can X. = 0$ and also $\kappa(S) = -\infty$, since in any case $\kappa(S) \le \kappa(X, K_X)$.
We may thus assume that $\kappa(X, K_X) \ge 0$, i.e.~$|mK_X| \ne \emptyset$ for some $m \ge 1$.
Pick $D_m \in |mK_X|$, for a suitable $m$.
The wedge product of twisted vector fields $v_1 \wedge v_2$ is a nonzero global section of $\can X \dual \tensor \sL_1 \tensor \sL_2$.
Its zero divisor is thus an element $D_{-1} \in |{- K_X + L_1 + L_2}|$.
Then $D_m + mD_{-1} \in |m(L_1 + L_2)|$ is an effective divisor with first Chern class zero.
It follows that $D_m + mD_{-1} = 0$ (pull back along $f$ and use the assumption on $S$).
Hence $D_m = 0$, i.e.~$K_X$ is torsion and $\hh0.X.\can X. \le 1$.
If $f' \from S' \to X$ is the minimal resolution, we have
\[ K_{S'} = f^* K_X - E \sim_\Q -E \]
with $E \ge 0$ an effective $f'$-exceptional divisor.
If $E = 0$, then $X$ has canonical singularities, hence it is smooth~\cite[Cor.~1.3]{GK13}.
So $E \gneq 0$ and $\kappa(S) = \kappa(S') = \kappa(S', -E) = -\infty$.
\end{proof}

\begin{clm} \label{ruled}
If $\phi \from S_0 \to C$ is a ruled surface, then the genus $g(C) \le 1$.
\end{clm}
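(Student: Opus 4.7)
The plan is to exploit the ruling $\phi \from S_0 \to C$ in combination with the twisted vector fields $v_i^0 \defn \pi_* \wt v_i$ on $S_0$. By construction the $v_i^0$ remain generically linearly independent (the locus of linear independence in $S$ is dense and meets the open subset where $\pi$ is an isomorphism), and \cref{c1 pushforward} ensures that $\cc1{\sL_i^0} = 0 \in \HH2.{S_0}.\R.$.

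First I would apply the derivative of the ruling to obtain sections $w_i \defn \d\phi(v_i^0) \in \HH0.{S_0}.{\phi^* \T C \tensor \sL_i^0}.$. If both of the $w_i$ vanished, then both $v_i^0$ would lie in the rank-one twisted subsheaf $\T{S_0/C} \tensor \sL_i^0 \subset \T{S_0} \tensor \sL_i^0$ and hence be pointwise collinear---contradicting the generic linear independence. Thus we may assume $w_1 \ne 0$.

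Second, I would identify $\sL_1^0$ with the pullback of a degree-zero line bundle from $C$. Since each fibre $F \isom \PP1$ of $\phi$ has $\Pic(\PP1) = \Z$ and $\cc1{\sL_1^0|_F} = 0$, the restriction $\sL_1^0|_F$ is trivial. Cohomology and base change then yield that $M \defn \phi_* \sL_1^0$ is a line bundle on $C$, and the adjunction map $\phi^* M \to \sL_1^0$ is fibrewise---hence globally---an isomorphism of line bundles. Injectivity of $\phi^* \from \HH2.C.\R. \to \HH2.{S_0}.\R.$ (e.g.~by pairing with a section of the ruling, or via the Leray spectral sequence) then forces $\deg M = 0$.

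Finally, the projection formula identifies $w_1$ with a nonzero section of $\T C \tensor M$ on $C$, yielding the inequality $0 \le \deg(\T C \tensor M) = 2 - 2g(C)$, i.e.~$g(C) \le 1$. The conceptual heart of the argument is the identification $\sL_1^0 \isom \phi^* M$ together with $\deg M = 0$; once that is in hand, the genus bound reduces to an elementary degree count on the base curve.
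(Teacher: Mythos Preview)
Your proof is correct and follows essentially the same route as the paper: both argue that the $v_i^0$ cannot simultaneously be vertical, project one of them via $\d\phi$ to a nonzero section of $\phi^* \T C \tensor \sL_1^0$, use $\cc1{\sL_1^0}=0$ to identify $\sL_1^0 \isom \phi^* M$ with $\deg M = 0$, and then read off $g(C)\le 1$ from the resulting section of $\T C \tensor M$. Your write-up is a bit more explicit about cohomology and base change and about why $\deg M = 0$, but the logic is the same.
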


\begin{proof}
The vector fields $v_i^0$, being generically linearly independent, cannot both be tangent to the fibres of $\phi$.
Hence $\HH0.S_0.\phi^* \T C \tensor \sL_i. \ne 0$ for, say, $i = 1$.
This is the same as $\HH0.C.\T C \tensor \phi_* \sL_1.$ by the projection formula, so $\phi_* \sL_1 \ne 0$.
Since $\cc1{\sL_1} = 0$, $\sL_1$ must be trivial on the fibres of $\phi$.
Thus $\phi_* \sL_1$ is a line bundle and $\sL_1 = \phi^* (\phi_* \sL_1)$.
We conclude from this that $\deg_C \phi_* \sL_1 = 0$.
Summing up, the line bundle $\T C \tensor \phi_* \sL_1$ has a nonzero global section (the image of $v_1$) and its degree is $2 - 2g(C) + \deg_C \phi_* \sL_1 = 2 - 2g(C) \ge 0$.
This immediately implies the claim.
\end{proof}

\begin{clm} \label{kappa 1}
If $\kappa(S) = 1$ and $\phi \from S_0 \to C$ is the pluricanonical map, let $L$ be a divisor on $C$ corresponding to the line bundle $\phi_* \can{S_0/C}$.
Assume that $\deg L = 0$.
Then
\begin{itemize}
\item $\hh1.S.\O S. = g + \hh0.C.L.$, where $g$ is the genus of $C$, and
\item $\hh0.S.\can S. \ge \hh0.C.K_C + L.$.
\end{itemize}
\end{clm}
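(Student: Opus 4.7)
The plan is to push both assertions down to the minimal model $S_0$ by observing that $\pi \from S \to S_0$ is a composition of blow-ups of smooth points, so that
\[ \hh1.S.\O S. = \hh1.S_0.\O{S_0}. \quad\text{and}\quad \hh0.S.\can S. = \hh0.S_0.\can{S_0}. \]
(standard birational invariance of these invariants for smooth surfaces). It therefore suffices to prove $\hh1.S_0.\O{S_0}. = g + \hh0.C.L.$ and $\hh0.S_0.\can{S_0}. \ge \hh0.C.K_C + L.$ working directly with the elliptic fibration $\phi \from S_0 \to C$.

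For the first identity, I would apply the Leray spectral sequence to $\phi_* \O{S_0}$. Since $\phi$ has connected fibres, $\phi_* \O{S_0} = \O C$, and relative Serre duality for the Gorenstein morphism $\phi$ of relative dimension one yields
\[ \RR1.\phi.\O{S_0}. \cong \big( \phi_* \can{S_0/C} \big) \dual \cong \O C(-L). \]
This is the same identification underlying the chain of equalities $\chi(\O S) = \deg_C \big( \phi_* \can{S/C} \big)$ used in the proof of \cref{surface hom zero}. Because $\HH2.C.\O C. = 0$ for dimension reasons, the five-term exact sequence collapses to
\[ 0 \lto \HH1.C.\O C. \lto \HH1.S_0.\O{S_0}. \lto \HH0.C.\O C(-L). \lto 0, \]
giving $\hh1.S_0.\O{S_0}. = g + \hh0.C.-L.$. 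The hypothesis $\deg L = 0$ combined with the first part of \cref{RR} then converts $\hh0.C.-L.$ into $\hh0.C.L.$, as desired.

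For the second assertion, the projection formula applied to $\phi$ yields
\[ \phi_* \can{S_0} = \phi_* \big( \can{S_0/C} \tensor \phi^* \can C \big) = \big( \phi_* \can{S_0/C} \big) \tensor \can C \cong \O C(K_C + L), \]
so taking global sections produces in fact the sharper identity $\hh0.S_0.\can{S_0}. = \hh0.C.K_C + L.$, a fortiori the inequality in the statement.

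The only non-formal ingredient is the identification $\RR1.\phi.\O{S_0}. \cong \O C(-L)$, which is relative duality for the Gorenstein elliptic fibration $\phi$; all remaining steps are standard manipulations with the Leray spectral sequence, the projection formula, and the Riemann--Roch-type input supplied by \cref{RR}.
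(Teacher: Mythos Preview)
Your argument is correct and, for the first item, essentially identical to the paper's: both use the Leray spectral sequence for $\phi$, identify $R^1\phi_*\O{S_0}$ with $\O C(-L)$ via duality, and then invoke \cref{RR}\labelcref{RR.1} to pass from $\hh0.C.-L.$ to $\hh0.C.L.$. For the second item the routes diverge slightly. The paper quotes Kodaira's canonical bundle formula $K_{S_0}=\phi^*(K_C+L)+\sum(m_i-1)F_i\ge\phi^*(K_C+L)$ and takes global sections, which only yields the inequality. You instead apply the projection formula directly to the defining relation $\phi_*\can{S_0/C}=\O C(L)$ to obtain $\phi_*\can{S_0}=\O C(K_C+L)$ and hence the \emph{equality} $\hh0.S_0.\can{S_0}.=\hh0.C.K_C+L.$. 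Your approach is shorter and avoids invoking the canonical bundle formula altogether; the paper's approach, on the other hand, makes the role of the multiple fibres visible (they are precisely the reason one might expect only an inequality, and one checks that in fact $\phi_*\O{S_0}\big(\sum(m_i-1)F_i\big)=\O C$).
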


\begin{proof}
The Leray spectral sequence for $\phi$ and $\O{S_0}$ gives
\[ \hh1.S_0.\O{S_0}. = \hh1.C.\O C. + \hh0.C.{\RR1.\phi.\O{S_0}.}. = g + \hh0.C.L. \]
by~\labelcref{RR.1}.
On the other hand, if $m_1 F_1, \dots, m_k F_k$, $m_i \ge 2$, are the multiple fibres of $\phi$, then Kodaira's canonical bundle formula~\cite[Thm.~V.12.1]{BHPV04} reads
\[ K_{S_0} = \phi^* (K_C + L) + \sum_{i=1}^k (m_i - 1) F_i \ge \phi^* (K_C + L). \]
Taking global sections yields the second claim.
\end{proof}

Now by \cref{kahler}, either $\kappa(S) = -\infty$, or $\kappa(S) \in \set{0, 1}$ and $S$ contains a divisor with vanishing first Chern class.
By \cref{surface hom zero} and the Kodaira--Enriques classification~\cite[Table~10 on p.~244]{BHPV04}, we are left with the possibilities for $S_0$ listed in \cref{788} \vpageref{788}.

\begin{table}[!t]
\renewcommand{\arraystretch}{1.4}
\scalebox{.95}{%
\begin{tabular}{|l|l|l|l|}
\hline
$S_0$ & $\hh1.S.\O S.$ & $\hh0.X.\can X.$ & $\hh0.S.\can S.$ \\ \hline \hline

$\PP2$ or ruled          & $\le 1$ (\cref{ruled})         & $\le 1$ (\cref{kahler}) & $0$ \\ \hline
Class VII\textsubscript0 & $1$ \cite[Thm.~IV.2.7]{BHPV04} & $\le 2$                 & $0$ \\ \hline
Primary Kodaira          & $2$                            & $\le 2$                 & $1$ \\ \hline
Secondary Kodaira        & $1$                            & $\le 2$                 & $0$ \\ \hline
\parbox[t]{8.7em}{\raggedright Minimal properly elliptic, non-\kahler, and $\chi(\O S) = 0$ \vspace{.75ex}} & $g + h^0(L)$ & $\le 2$ & $\ge h^0(K_C + L)$ \\ \hline
\end{tabular}
}
\bigskip
\caption{Possibilities for $S_0$ and corresponding dimensions of cohomology groups} \label{788}
\end{table}

In each case, the estimate~\labelcref{est} yields $\hh0.X.{\RR1.f.\O S.}. \le \hh0.X.\can X. + 1$.
In the last case, this is seen as follows, using~\labelcref{RR.2}:
\begin{equation*}
g + \hh0.C.L. - \hh0.C.K_C + L. =
\left\lbrace \hspace{-.4em}
\renewcommand{\arraystretch}{1.2}%
\begin{array}{ll}
g + 1 - g, & \text{$L$ trivial,} \\
g - (g - 1), & \text{$L$ non-trivial,} \\
\end{array}
\hspace{-.4em} \right\rbrace
= 1.
\end{equation*}
Hence if $\hh0.X.\can X. \le 1$, then we can conclude by \cref{lowgenus lz} that $X$ is smooth, just as in the proof of \cref{TX gen nef}.
We will thus from now on assume that $\hh0.X.\can X. = 2$.
In view of the above table, this means that the first line ($S_0 = \PP2$ or ruled) can be excluded.
Also, the algebraic dimension $\mathrm a(S_0) = 1$, as the ratio of two linearly independent sections of $\can X$ provides a non-constant meromorphic function on $X$ and then also on $S_0$.

\begin{clm} \label{771}
Any irreducible curve contained in $S_0$ is smooth elliptic.
\end{clm}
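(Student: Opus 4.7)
The plan is to combine the classification of $S_0$ from \cref{788} with intersection-theoretic constraints coming from the twisted vector fields $v_1^0, v_2^0$, whose wedge is a nonzero section of $\can{S_0}\dual \tensor \sL_1 \tensor \sL_2$.

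I begin by observing that in each of the remaining rows of \cref{788} -- class~VII$_0$ with $\mathrm a(S_0) = 1$, primary or secondary Kodaira, and minimal properly elliptic non-\kahler with $\chi(\O S) = 0$ -- the surface $S_0$ carries a relatively minimal elliptic fibration $\phi \from S_0 \to C$. For Kodaira surfaces this is part of the definition, for the properly elliptic case it is the pluricanonical map, and for class~VII$_0$ surfaces of algebraic dimension one it is the algebraic reduction. Since $\mathrm a(S_0) = 1$ throughout, $\phi$ coincides with the algebraic reduction, and it is a standard fact that every irreducible curve $D \subset S_0$ must then be contained in a (possibly non-reduced) fibre of $\phi$: a horizontal curve would yield an additional algebraically independent meromorphic function.

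Fix such a $D$, lying in a fibre $F$ of $\phi$. Assume first that $D$ is not contained in the zero divisor of $v_1^0 \wedge v_2^0$, so that $v_1^0$ and $v_2^0$ are linearly independent at a general point of $D$. Because the tangent line $T_p D$ is one-dimensional, at least one of them -- say $v_1^0$ -- must be transverse to $D$ at a general point, yielding a nonzero morphism of line bundles $\sL_1\dual|_{\Reg D} \to N_{D/S_0}|_{\Reg D}$. Using $\cc 1{\sL_1} = 0 \in \HH 2.S_0.\R.$, and hence $\deg \sL_1|_D = 0$, this forces $D^2 = \deg N_{D/S_0} \ge 0$; combined with Zariski's lemma applied to the fibre $F$, we conclude that $F$ is irreducible with $D = F_{\mathrm{red}}$. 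The complementary case, where $D$ lies inside the vanishing divisor of $v_1^0 \wedge v_2^0$, I would reduce to the previous one by showing that this vanishing divisor is itself vertical with respect to $\phi$ and analysing the linear relation between the $v_i^0$ along $D$.

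It then remains -- and this I anticipate as the main obstacle -- to exclude the Kodaira fibre types $I_1$ (nodal rational) and $II$ (cuspidal rational), thereby concluding that $D$ is smooth elliptic (an irreducible fibre with $D^2 = 0$ has arithmetic genus one by adjunction). My approach would be to normalise $D$ via $\nu \from \PP 1 \to D$ and to study the pulled-back twisted vector fields $\nu^* v_i^0$ as meromorphic sections of $\T{\PP 1} \tensor \nu^*(\sL_i|_D)$. Since $\cc 1{\sL_i} \cdot D = 0$, the line bundles $\nu^*(\sL_i|_D)$ are of degree zero, so the pulled-back fields are controlled entirely by $\deg \T{\PP 1} = 2$. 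A careful local analysis over the preimage of the singular point of $D$ -- in particular, of the orders of vanishing and of how the normalisation separates the tangent and transverse directions at the node or cusp -- should produce the required numerical contradiction; if needed, \cref{derivations} can be invoked at the singular point of $D$ to constrain the behaviour of vector fields there.
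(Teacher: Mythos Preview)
Your approach diverges from the paper's in a fundamental way: you try to exploit the twisted vector fields $v_i^0$ throughout, whereas the paper's proof of this claim does not use them at all. Instead, the paper proceeds case by case via structure theory: primary Kodaira surfaces are elliptic bundles (hence all fibres smooth elliptic), secondary Kodaira surfaces are handled by passing to the \'etale primary cover, class~VII$_0$ surfaces with $\mathrm a = 1$ are identified as Hopf surfaces via Kodaira's classification and the fibres of the explicit map $[z_1^k:z_2^\ell]$ are checked to be smooth elliptic by direct computation, and in the properly elliptic case the condition $\deg L = 0$ forces (by~\cite[Thm.~III.18.2]{BHPV04}) every singular fibre to be a multiple of a smooth elliptic curve.

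Your argument has a genuine gap at step~4, which you yourself flag as the ``main obstacle''. The proposed method does not obviously produce a contradiction. Pulling $\T{S_0}\big|_D$ back along $\nu \from \PP1 \to D$ gives a degree-zero bundle, which (if the $v_i^0$ remain independent along $D$) must split as $\O{\PP1} \oplus \O{\PP1}$; but then the pulled-back twisted fields are just two everywhere-independent constant sections, and nothing forbids this on a nodal or cuspidal $D$. Your appeal to \cref{derivations} is misplaced: that proposition constrains vector fields \emph{on} a singular space, whereas the $v_i^0$ are sections of $\T{S_0}$, the tangent sheaf of the ambient smooth surface, and have no reason to be tangent to $D$. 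There is also a smaller gap in step~3, where the ``complementary case'' ($D$ contained in the zero locus of $v_1^0 \wedge v_2^0$) is only gestured at. Both issues are sidestepped entirely by the paper's structural approach, which is why that route is preferable here.
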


\begin{proof}
Assume first that $S_0$ is a primary Kodaira surface, i.e.~in particular a locally trivial fibration with fibre $F$ an elliptic curve.
If $C \subset S_0$ were a curve not contained in a fibre, then $(C + nF)^2 > 0$ for $n \gg 0$ and so $S_0$ would be projective~\cite[Thm.~IV.6.2]{BHPV04}, which it is not.
Hence \cref{771} is true in this case.
A secondary Kodaira surface admits an \'etale covering by a primary Kodaira surface, so any of its curves must be smooth and then also elliptic by the Hurwitz formula.

We next treat the case where $S_0$ is of class VII\textsubscript0.
By~\cite[Thm.~35]{KodairaStructureII}, $S_0$ is a Hopf surface.
As any Hopf surface has an \'etale covering by a primary Hopf surface~\cite[Thm.~30]{KodairaStructureII}, we may assume by the same argument as above that $S_0$ is itself primary.
Then $S_0$ is the quotient of $W \defn \C^2 \setminus \set0$ by the infinite cyclic group $G$ generated by the automorphism $(z_1, z_2) \mapsto (\alpha_1 z_1, \alpha_2 z_2)$, where $0 < |\alpha_1| \le |\alpha_2| < 1$ and $\alpha_1^k = \alpha_2^\ell$ for certain positive integers $k, \ell$~\cite[Thm.~31]{KodairaStructureII}.
We may assume that $k$ and $\ell$ are minimal with this property.
The non-constant meromorphic function $z_1^k / z_2^\ell$ then defines a map $\phi \from S_0 \to \PP1$ with connected fibres.
We claim that all fibres of $\phi$ are smooth elliptic curves.
To this end, let $\wt\phi \from W \to \PP1$ be the pullback of $\phi$ to the universal covering.
By calculating the differential of $\wt\phi$, we see that this map has rank one at all points $(z_1, z_2) \in W$ with $z_1z_2 \ne 0$.
So all the fibres $F_\lambda \defn \phi\inv(\lambda)$ with $\lambda \ne 0, \infty$ are smooth.
As the second Betti number $b_2(S_0) = 0$, all intersection numbers on $S_0$ are zero.
In particular, $\deg K_{F_\lambda} = \big( K_{S_0} + F_\lambda \big) \cdot F_\lambda = 0$ by adjunction and so $F_\lambda$ is an elliptic curve.

It remains to consider $F_0$ and $F_\infty$.
The fibre $F_0$ is the quotient of $\wt\phi\inv(0) = \set{z_1 = 0} \isom \C^*$ by the group $G$, which acts on $\wt\phi\inv(0)$ via multiplication by $\alpha_2$.
Via the exponential map, $F_0$ is thus seen to be isomorphic to $\C$ modulo a lattice, that is, an elliptic curve.
The argument for $F_\infty$ is the same.
So all fibres of $\phi$ are smooth elliptic.
As in the case of Kodaira surfaces, every curve on $S_0$ is contained in a fibre of $\phi$ and hence the claim is proven in this case, too.

Finally, if $S_0$ is minimally elliptic and $\phi \from S_0 \to C$ is the pluricanonical map, then we have seen that $\deg_C (\phi_* \can{S_0/C}) = 0$.
By~\cite[Thm.~III.18.2]{BHPV04} this implies that the only singular fibres of $\phi$ are multiples of smooth elliptic curves.
In particular, set-theoretically all fibres are smooth elliptic.
Again, there are no other curves except the fibres and so the proof is finished.
\end{proof}

As observed above, we have $\hh0.X.{\RR1.f.\O S.}. \le 3$.
If $X$ has only singularities of genus at most two, we conclude by \cref{lowgenus lz}.
Otherwise, there is a genus~$3$ singularity $x \in X$, and it is the unique singular point of $X$.
If $\Exc(f)$ contains a non-rational curve, then $x \in X$ has $g \ge 1$, hence $p_g - g - b \le 2$ and we are done by \cref{lowgenus lz} again.
If, on the other hand, $\Exc(f)$ consists solely of rational curves, then in particular every $f$-exceptional curve gets contracted to a point by $\pi$ thanks to \cref{771}.
In other words, $\Exc(f) \subset \Exc(\pi)$.
By the Theorem on Formal Functions, $\RR1.f.\O S.$ can be computed as
\[ \varprojlim_Z \HH1.Z.\O Z., \]
where the inverse limit runs over all cycles $Z$ with $\supp Z \subset \Exc(f)$.
By smoothness of $S_0$, we have $\RR1.\pi.\O S. = 0$ and thus, by the Theorem on Formal Functions again, $\HH1.Z.\O Z. = 0$ for all $Z$ with $\supp Z \subset \Exc(\pi)$.
Since $\Exc(f) \subset \Exc(\pi)$, we con\-clude that $x \in X$ is a rational singularity and so $X$ is smooth by \cref{lowgenus lz}. \qed

\begin{rem-plain}
We would like to discuss which parts of the above argument can be generalized, in particular with respect to \cref{788}.
In the first case, $S_0 = \PP2$ or a ruled surface, the condition $\hh0.X.\can X. \le 2$ is automatic by \cref{kahler}, but the existence of two twisted vector fields on $X$ is necessary in \cref{ruled} to exclude ruled surfaces over curves of general type.
If there is only one vector field, all one can say is that such a ruled surface would be decomposable.

The vector fields $v_{1,2}$ are also used in \cref{kahler} to rule out the situation that $S_0$ is \kahler and of non-negative Kodaira dimension.
Without this assumption, several new cases need to be dealt with:
\begin{itemize}
\item If $S_0$ is a complex $2$-torus, then $\hh1.S.\O S. - \hh0.S.\can S. = 1$ and so we can still prove smoothness of $X$ if $\hh0.X.\can X. \le 2$.
Note that \cref{771} does not hold any longer, but this is not a problem because it is still true that $S_0$ contains no rational curves.
\item If $S_0$ is bi-elliptic, then again $\hh1.S.\O S. - \hh0.S.\can S. = 1$ and there are no rational curves on $S_0$.
The conclusion is thus the same as in the torus case.
\item If $S_0$ is a K3 surface, we have $\hh1.S.\O S. - \hh0.S.\can S. = -1$.
Thus the assumption $\hh0.X.\can X. \le 3$ is sufficient.
The case $\hh0.X.\can X. = 4$, however, appears difficult due to the failure of \cref{771}: $S_0$ could certainly contain a lot of rational curves.
\item If $S_0$ is an Enriques surface, then $\hh1.S.\O S. - \hh0.S.\can S. = 0$.
Similarly to the K3 case, $\hh0.X.\can X. \le 2$ is fine, but $\hh0.X.\can X. = 3$ is not.
\item If $\kappa(S_0) = 1$, we use notation as in \cref{kappa 1}.
If $\deg L = 0$, we have already seen that $\hh1.S.\O S. - \hh0.S.\can S. \le 1$.
If $\hh0.X.\can X. \le 1$ or if $g > 0$, the above arguments apply.
The remaining case $\hh0.X.\can X. = 2$ and $g = 0$ needs to be addressed either by showing $\hh0.S.\can S. = 1$ or by excluding any rational curves on $S_0$.

If $\deg L > 0$, then clearly $\hh1.S.\O S. = g + h^0(L \dual) = g$ and $\hh0.S.\can S. \ge h^0(K_C + L) \ge g$, hence the difference is $\le 0$.
The conclusion is as in the Enriques case, because the singular fibres of $\phi$ can very well have rational components.
\item If $S_0$ is of general type, it appears difficult (if not impossible) to give a general upper bound on $\hh1.S.\O S. - \hh0.S.\can S.$ and hence we do not believe that statements in the style of \cref{lowgenus lz} are useful for handling this situation.
\end{itemize}
\end{rem-plain}

\providecommand{\bysame}{\leavevmode\hbox to3em{\hrulefill}\thinspace}
\providecommand{\MR}{\relax\ifhmode\unskip\space\fi MR}
% \MRhref is called by the amsart/book/proc definition of \MR.
\providecommand{\MRhref}[2]{%
  \href{http://www.ams.org/mathscinet-getitem?mr=#1}{#2}
}
\providecommand{\href}[2]{#2}

\end{document}